\newcommand{\ben}{\begin{enumerate}}
\newcommand{\een}{\end{enumerate}}
\newcommand{\eq}[2][label]{\begin{equation}\label{#1}#2\end{equation}}
\newcommand{\av}[2]{\langle #1\rangle_{_{\scriptstyle #2}}}
\newcommand{\ve}{\varepsilon}
\newcommand{\bel}[1]{\boldsymbol{#1}}
\newcommand{\BMO}{{\rm BMO}}
\newcommand{\rn}{\mathbb{R}^n}
\newtheorem{theorem}{Theorem}[section]
\newtheorem{lemma}[theorem]{Lemma}
\newtheorem{corollary}[theorem]{Corollary}
\newtheorem{prop}[theorem]{Proposition}
\newtheorem*{theorem*}{Theorem}{\bf}{\it}
\newtheorem*{proposition*}{Proposition}{\bf}{\it}
\newtheorem*{observation*}{Observation}{\bf}{\it}
\newtheorem*{lemma*}{Lemma}{\bf}{\it}
\newtheorem*{conjecture*}{Conjecture}{\bf}{\it}
\theoremstyle{definition}
\theoremstyle{remark}
\newtheorem{remark}[theorem]{Remark}
\numberwithin{equation}{section}
\begin{document}

\title[Dimension-free estimates]{Dimension-free estimates for semigroup BMO and $A_p$}

\author{Leonid Slavin}
\author{Pavel Zatitskii}
\address{University of Cincinnati, P.O. Box 210025, OH 45221-0025, USA}

\email{leonid.slavin@uc.edu}

\address{St. Petersburg State University, 29B 14th Line V.O., Saint Petersburg, 199178, Russia \and St. Petersburg Department of Steklov Mathematical Institute of Russian Academy of Sciences,
27~Fontanka, Saint Petersburg, 191023, Russia}

\email{pavelz@pdmi.ras.ru}

\thanks{The first author is supported by the Simons Foundation, collaboration grant 317925}
\thanks{The second author is supported by the Russian Science Foundation: Theorem~\ref{main_bmo}, Corollary~\ref{cor_bmo}, and Lemma~\ref{ito} were obtained with support from the RSF grant 19-71-30002}

\subjclass[2010]{Primary 42A05, 42B35, secondary 42A61, 49K20}

\keywords{BMO, $A_p$ weights, dimension-free estimates, Bellman functions}


\begin{abstract}
 Let $K_t$ be either the heat or the Poisson kernel on $\mathbb{R}^n.$ Let $\mathcal{A}$ stand either for BMO equipped with the quadratic seminorm or for $A_p,$ $1< p\le\infty.$ We establish the following transference between the class $\mathcal{A}$ on an interval $I\subset\mathbb{R}$ and its $K$-version, $\mathcal{A}^K,$ on $\rn$: If a given integral functional admits an estimate on $\mathcal{A}(I),$ then the same estimate holds for $\mathcal{A}^K(\rn),$ with all Lebesgue averages replaced by $K$-averages. In particular, all such estimates are dimension-free. 
As an application, via the heat kernel, we obtain a weakly-dimensional theory for $\BMO(\rn)$ on balls. In particular, we show that the John--Nirenberg constant of this space decays with dimension no faster than $n^{-1/2}.$
\end{abstract}
\maketitle

\section{Preliminaries}

For a ball $B\subset \rn$ and $\varphi\in L^1_{loc}(\rn),$ write
$
\av{\varphi}B=\frac1{|B|}\int_B\varphi.
$
The space
$\BMO(\mathbb{R}^{n})$ is the set of all locally integrable, real-valued functions $\varphi$ on $\mathbb{R}^n$ for which
$$
\|\varphi\|^{}_{*}:=\sup_{\text{ball}~B}\av{|\varphi-\av{\varphi}B|^{2}}B^{\frac12}=\sup_{\text{ball}~B}(\av{\varphi^2}B-\av{\varphi}B^2\big)^{\frac12}<\infty.
$$
If $n=1$ and all balls $B$ are subintervals of a given interval $I,$ we write $\BMO(I)$ instead of $\BMO(\mathbb{R})$ and $\|\varphi\|^{}_{*,I}$ instead of $\|\varphi\|^{}_{*}.$

For $1<p<\infty,$ the class $A_p(\rn)$ is the set of all locally integrable, almost everywhere positive functions $w$ (called weights) such that 
$$
[w]_{p}:=\sup_{\text{ball}~B}\av{w}B\av{w^{-\frac1{p-1}}}B^{p-1}<\infty.
$$
The class $A_\infty(\rn)$ is the set of all weights $w$ such that
$$
[w]_\infty:=\sup_{\text{ball}~B}\av{w}Be^{-\av{\log w}{\scriptscriptstyle B}}<\infty.
$$
For all $p,$ the quantity $[w]_p$ is referred to as the $A_p$-characteristic of the weight $w.$ 
As in the case of BMO, we will write $A_p(I)$ and $[w]_{p,I}$ when $n=1$ and all balls $B$ are subintervals of a given finite interval $I.$

For $y\in\rn$ and $t>0,$ the Poisson kernel and the heat kernel are given, respectively, by:
\eq[kernels]{
P_t(y)=\frac{\Gamma\big(\frac{n+1}2\big)}{\pi^{\frac{n+1}2}}\, \frac t{(t^2+|y|^2)^{\frac{n+1}2} },\qquad
H_t(y)=\frac1{(4\pi t)^{\frac n2}}\, e^{-\frac{|y|^2}{4t}}.
}
We will write $K_t,$ or simply $K,$ for both $P_t$ and $H_t$ in statements that apply to both kernels. 

For any sufficiently integrable function $g$ on $\mathbb{R}^n,$ $y\in\mathbb{R}^n,$ and $t>0,$ let
$$
g_{\scriptscriptstyle K}(y,t)=(K_t*g)(y)
$$
be the $K$-extension of $g$ into $\mathbb{R}_+^{n+1}.$ (This convolution gives an operator semigroup.) We will often write $z=(y,t)$ and use the shorthand $g(z)$ for $g_{\scriptscriptstyle K}(z)$ when $K$ can be taken to be either $P$ or $H$ or when it is clear which one is meant.
\medskip

If $\varphi\in\BMO(\rn),$ then $\varphi(z)$ and $\varphi^2(z):=(\varphi^2)_{\scriptscriptstyle K}(z)$ are defined for all $z\in\mathbb{R}_+^{n+1}.$ In fact, it is well known that each of the following is an equivalent norm on $\BMO(\rn)$:
$$
\|\varphi\|_{\scriptscriptstyle P}:=\sup_{z\in\mathbb{R}_+^{n+1}}\big(\varphi^2_{\scriptscriptstyle P}(z)-\varphi_{_P}(z)^2\big)^{\frac 12}, \quad
\|\varphi\|_{\scriptscriptstyle H}:=\sup_{z\in\mathbb{R}_+^{n+1}}\big(\varphi^2_{\scriptscriptstyle H}(z)-\varphi_{_H}(z)^2\big)^{\frac 12}.
$$
(One of the constants of equivalence will come into play in Section~\ref{appl}.) 
To emphasize the choice of the norm, we will sometimes refer to $\BMO$ as $\BMO_*$ or $\BMO_{\scriptscriptstyle K},$ as appropriate.

By analogy with BMO, we define the class $A_p^{\scriptscriptstyle K}(\rn),$ $1<p<\infty,$ to be the set of all weights $w$ for which the following inequality holds:
$$
[w]_{p}^{\scriptscriptstyle K}:=\sup_{z\in\mathbb{R}_+^{n+1}}w(z)\Big[w^{-\frac1{p-1}}(z)\Big]^{p-1}<\infty,
$$
while the class $A_\infty^{\scriptscriptstyle K}(\rn)$ is the set of all weights $w$ such that
$$
[w]_{\infty}^{\scriptscriptstyle K}:=\sup_{z\in\mathbb{R}_+^{n+1}} w(z)e^{-\log w(z)}<\infty.
$$
It is easy to show that $A_p^{\scriptscriptstyle H}(\rn)=A_p(\rn)$ for all $n\ge1$ and all $p,$ and that the corresponding characteristics are equivalent, in the sense of two-sided inequalities. However, if $p<n+1,$ then $A_p^{\scriptscriptstyle P}(\rn)\subsetneq A_p(\rn).$ (To wit, $w_a(x):=|x|^{-an}\in A_p(\rn)$ for $a\in[0,1),$ but $w_a\notin A_p^{\scriptscriptstyle P}(\rn)$ unless $p\ge n+1.)$

Our main result is a transference between integral estimates on $\BMO(I)$ and $\BMO_{\scriptscriptstyle K}(\rn),$ and, separately, between estimates on $A_p(I)$ and $A_p^{\scriptscriptstyle K}(\rn).$ To elaborate,
if $f$ is a non-negative function on $\mathbb{R}$ such that the integral functional $\av{f\circ\eta}I$ is bounded on $\BMO_*(I),$ then exactly the same bound holds for $(f\circ\varphi)(z)$ on $\BMO_{\scriptscriptstyle K}(\rn)$ with all averages over $I$ replaced with $K$-averages, and similarly for $A_p.$ This result follows from a subordination relationship between the corresponding Bellman functions. We do not actually compute any Bellman functions in this paper; instead, they are defined in the abstract, as solutions of extremal problems.
The transference we prove is intuitive -- indeed, Bellman functions in model settings have long been used to obtain estimates in related problems. As is often the case, it amounts to a Jensen-type inequality for the model function. Such inequalities are straightforward when one has a convex or concave function defined on a convex domain. In our situation, however, there are two distinct challenges: the non-convexity of the Bellman domains for BMO and $A_p$ and the apparent lack of information about the structure of abstract Bellman functions. The former is handled with a probabilistic argument dependent on the semigroup nature of the kernel $K.$ The latter is resolved by a recent result from~\cite{sz} which establishes {\it a priori} local concavity of Bellman functions for general averaging classes on an interval, including $\BMO$ and $A_p.$

Our main application is a dimensional strengthening of known integral estimates for BMO on balls. It relies on a further transference, from $\BMO_{\scriptscriptstyle H}(\rn)$ to $\BMO_*(\rn).$ Specifically, we show that $\|\varphi\|_{\scriptscriptstyle H}\le C\sqrt n\|\varphi\|_*,$ which means that an estimate on $\BMO_*(I)$ automatically produces a ``$\sqrt n$''-estimate for $\BMO_*(\rn).$ We have the following schematic:
$$
\BMO_*(I)\xRightarrow{\text{dimension-free~}} \BMO_{\scriptscriptstyle H}(\rn) \xRightarrow{\sqrt n~} \BMO_*(\rn).
$$
In particular, we show that the John--Nirenberg constant of $\BMO_*(\rn)$ decays no faster than $\frac1{\sqrt n}.$ This is a notable improvement from what is currently available, as the usual methods for proving estimates for $\BMO$ on balls involve dyadic decompositions, which produce exponential dependence on dimension (thus, exponential decay for the John--Nirenberg constant).

We have chosen here to focus on two classical semigroup kernels, $P_t$ and $H_t,$ and two of the best known averaging classes, $\BMO$ and $A_p$. The proofs of our main theorems given in Section~\ref{proofs} depend on the probabilistic representation of the kernel $K_t,$ which is particularly simple when $K=P$ or $K=H.$ However, the argument would also work for a more general Markovian semigroup, with appropriate adjustments to the probabilistic formalism. Likewise, our results also hold for much more general averaging classes -- specifically, the classes $A_\Omega$ defined in~\cite{sz}. However, unlike a general $A_\Omega$ domain, the Bellman domains for $\BMO$ and $A_p$ possess homogeneity (additive for BMO, multiplicative for $A_p$), and that allows for simple mollification procedures; see Section~\ref{proofs}. Absent such homogeneity, the mollification would be quite a bit more involved. Going further still, our arguments will work for averaging classes on domains in $\rn$ or even in general metric spaces, as long as one can define, say, the heat kernel. Of course, in such settings one would not have explicit formulas for the kernels such as~\eqref{kernels}, making it harder to express the estimates obtained through the classical norms or characteristics, which is something we do in Section~\ref{appl} below. We intend to consider general semigroups $K,$ general classes $A_\Omega,$ and, possibly, general domains elsewhere.

The rest of the paper is organized as follows. Section~\ref{bellman_defs} contains the necessary Bellman definitions. In Section~\ref{main}, we state the main inequalities for Bellman functions, Theorems~\ref{main_bmo} and~\ref{main_ap}, and their implications for integral estimates, Corollaries~\ref{cor_bmo}~and~\ref{cor_ap}. In Section~\ref{appl}, we obtain general estimates for $\BMO_*(\rn)$ (Corollary~\ref{cor_rn}); Theorem~\ref{thjn} then gives the new bound on the John--Nirenberg constant of $\BMO_*(\rn).$ Finally, in Section~\ref{proofs}, we prove the theorems from Section~\ref{main}.

\section{Bellman function definitions}
\label{bellman_defs}
In what follows, $f$ is a measurable non-negative function on $\mathbb{R};$ the numbers $\mu>0,$ $\delta>1,$ and $p>1$ are fixed; $I$ is a finite interval; and  $z=(y,t)$ is a point in $\rn\times\mathbb{R}_+.$ 

We first define Bellman functions for BMO:
\eq[bbmou]{
\bel{B}_*(x; \mu,f)=\sup\big\{\av{f\circ\eta}I:~\|\eta\|^{}_{*,I}\le\mu,~\av{\eta}I=x_1,~\av{\eta^2}I=x_2\big\},
}
\eq[bbmok]{
\bel{B}_{\scriptscriptstyle K}(x; \mu,f,n)=\sup\big\{(f\circ\varphi)(z):~\|\varphi\|_{\scriptscriptstyle K}\le\mu,~\varphi(z)=x_1,~\varphi^2(z)=x_2\big\}.
}
An easy rescaling argument shows that $\bel{B}_*$ does not depend on $I$ and $\bel{B}_{\scriptscriptstyle K}$ does not depend on $z.$ Both $\bel{B}_*$ and $\bel{B}_{\scriptscriptstyle K}$ are defined, as functions of $x,$ on the parabolic domain
\eq[ombmo]{
\Omega_\mu:=\{x=(x_1,x_2)\in\mathbb{R}^2\colon~x_1^2\le x_2 \le x_1^2+\mu^2\}.
}
They also satisfy the following boundary condition:
$$
\bel{B}_*(x_1,x_1^2; \mu, f)=\bel{B}_{\scriptscriptstyle K}(x_1,x_1^2; \mu, f,n)=f(x_1),\quad x_1\in\mathbb{R}.
$$
This is because the only functions $\eta$ on $I$ such that $\av{\eta}I^2=\av{\eta^2}I$ are constants; the same is true for functions $\varphi$ on $\rn$ such that $\varphi(z)^2=\varphi^2(z).$
 
The analogs of definitions~\eqref{bbmou} and~\eqref{bbmok} for $A_p,$ $1<p<\infty,$ are as follows:
\eq[bapu]{
\bel{D}_p(x; \delta,f)=\sup\big\{\av{f\circ v}I:~[v]_{p,I}\le\delta,~\av{v}I=x_1,~\av{v^{-\frac1{p-1}}}I=x_2\big\},
}
\eq[bapk]{
\bel{D}_{p,{\scriptscriptstyle K}}(x; \delta,f,n)=\sup\big\{(f\circ w)(z):~[w]_{p}^{\scriptscriptstyle K}\le\delta,~w(z)=x_1,~w^{-\frac1{p-1}}(z)=x_2\big\}.
}
For $p=\infty,$ we define
\eq[bainfu]{
\bel{D}_\infty(x; \delta,f)=\sup\big\{\av{f\circ v}I:~[v]_{\infty,I}\le\delta,~\av{v}I=x_1,~\av{\log v}I=x_2\big\},
}
\eq[bainfk]{
\bel{D}_{\infty,{\scriptscriptstyle K}}(x; \delta,f,n)=\sup\big\{(f\circ w)(z):~[w]_{\infty}^{\scriptscriptstyle K}\le\delta,~w(z)=x_1,~\log w(z)=x_2\big\}.
}
Again, we see that $\bel{D}_p$ and $\bel{D}_{p,{\scriptscriptstyle K}}$ do not depend on $I$ and $z,$ respectively. For $p<\infty,$ the domain of definition for $\bel{D}_{p}$ and $\bel{D}_{p,{\scriptscriptstyle K}}$ is
\eq[omap]{
\Omega_{p,\delta}:=\{x=(x_1,x_2)\in\mathbb{R}^2\colon~x_1>0,~x_2>0,~1\le x_1x_2^{p-1}\le \delta\},
}
and the natural boundary condition is
$$
\bel{D}_p\big(x_1, x_1^{-1/(p-1)}; \delta, f\big)=\bel{D}_{p,{\scriptscriptstyle K}}\big(x_1, x_1^{-1/(p-1)}; \delta, f,n\big)=f(x_1),\quad x_1>0.
$$
For $p=\infty,$ the domain is
\eq[omainf]{
\Omega_{\infty,\delta}:=\{x=(x_1,x_2)\in\mathbb{R}^2\colon~x_1>0,~1\le x_1e^{-x_2}\le \delta\},
}
and the boundary condition is
$$
\bel{D}_\infty\big(x_1, \log x_1; \delta, f\big)=\bel{D}_{\infty,{\scriptscriptstyle K}}\big(x_1, \log x_1; \delta, f,n\big)=f(x_1),\quad x_1>0.
$$

The study of Bellman functions of the form~\eqref{bbmou} for BMO on an interval started with~\cite{sv}, where the first such function was computed for $f(t)=e^t.$ 
It was continued in~\cite{sv1}, where the case $f(t)=|t|^p,~p>0,$ was dealt with and the beginnings of a general PDE- and geometry-based theory for a general $f$ were laid out. That theory was fully developed in~\cite{long1} and~\cite{long2}. As a result, one can now compute $\bel{B}_*(\,\cdot\,; \mu, f)$ for any $f$ satisfying mild regularity conditions. Moreover, the techniques developed in these papers for BMO have been extended in~\cite{short} to other averaging classes, such as $A_p;$ thus, one can now compute the function $\bel{D}_{p}$ under similar assumptions on $f.$ However, nothing has been known about the functions $\bel{B}_{\scriptscriptstyle K}$ and $\bel{D}_{p,{\scriptscriptstyle K}}.$

\section{The main results}
\label{main}
Here are our main theorems connecting the Bellman functions for the classical $\BMO$ and $A_p$ with their $K$-analogs.
\begin{theorem}
\label{main_bmo}
For any non-negative measurable function $f$ on $\mathbb{R}$ and any numbers $\mu$ and $\tilde\mu,$ such that  $0<\tilde\mu<\mu,$ we have
\eq[bmo_case]{
\bel{B}_{\scriptscriptstyle K}(x; \tilde\mu,f,n)\le \bel{B}_*(x; \mu,f),~x\in\Omega_{\tilde\mu}.
}
\end{theorem}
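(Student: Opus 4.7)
My plan is to exploit the semigroup/probabilistic nature of $K_t$ to transfer estimates on $\bel{B}_*$ to $\bel{B}_{\scriptscriptstyle K}$. Fix any competitor $\varphi$ for the supremum defining $\bel{B}_{\scriptscriptstyle K}(x;\tilde\mu,f,n)$, and write $z=(y_0,t_0)$. Let $Y_s$ be the L\'evy process whose marginal law at time $s$ has density $K_s$ (Brownian motion for $K=H$, the Cauchy process for $K=P$), and consider the space-time process $Z_s:=(y_0+Y_s,\,t_0-s)$, $s\in[0,t_0]$, which starts at $z$ and terminates on the boundary $\{t=0\}$. By the semigroup identity applied separately to $K_{t_0-s}\ast\varphi$ and $K_{t_0-s}\ast\varphi^2$, both scalar processes $\varphi(Z_s)$ and $\varphi^2(Z_s)$ are martingales; consequently, the vector process
$$
X_s:=\big(\varphi(Z_s),\,\varphi^2(Z_s)\big),\qquad s\in[0,t_0],
$$
is a martingale with $X_0=x$, taking values in $\Omega_{\tilde\mu}$ (since $\|\varphi\|_{\scriptscriptstyle K}\le\tilde\mu$), and landing on the parabola $\{x_2=x_1^2\}$ at terminal time.

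With this martingale in place, the boundary identity $\bel{B}_*(x_1,x_1^2;\mu,f)=f(x_1)$ combined with the martingale property yields
$$
(f\circ\varphi)(z)\;=\;\mathbb{E}\big[f(\varphi(Z_{t_0}))\big]\;=\;\mathbb{E}\big[\bel{B}_*(X_{t_0};\mu,f)\big],
$$
so the theorem reduces to the Jensen-type inequality $\mathbb{E}[\bel{B}_*(X_{t_0};\mu,f)]\le \bel{B}_*(X_0;\mu,f)=\bel{B}_*(x;\mu,f)$. I would establish this by applying an It\^o-type formula (presumably the one encoded in Lemma~\ref{ito}) to $\bel{B}_*(\,\cdot\,;\mu,f)$ along $X_s$, and signing the second-order term via the concavity of $\bel{B}_*$; taking the supremum over $\varphi$ then yields~\eqref{bmo_case}.

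The chief obstacle is that $\Omega_\mu$ is not convex: its lower boundary is the parabola $x_2=x_1^2$, so $\bel{B}_*$ cannot be globally concave on $\Omega_\mu$, and no single Jensen step is available. Two features rescue the argument. First, the cited result from~\cite{sz} provides \emph{a priori} local concavity of $\bel{B}_*$ on $\Omega_\mu$, which is exactly what one needs in order to sign the Hessian term in It\^o along an infinitesimally local trajectory. Second, the strict inequality $\tilde\mu<\mu$ provides a buffer: the martingale lives in $\Omega_{\tilde\mu}$, which sits a positive $x_2$-distance $\mu^2-\tilde\mu^2$ away from the free part of $\partial\Omega_\mu$. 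Exploiting the additive translational homogeneity of BMO (translations of $\varphi$ induce equivariant shifts of $\bel{B}_*$), I would mollify $\bel{B}_*(\,\cdot\,;\mu,f)$ on a scale smaller than this buffer to produce a $C^2$ surrogate that retains the boundary values and inherits the local concavity on $\Omega_{\tilde\mu}$. The It\^o computation then runs cleanly for the mollified function, and a final limit passage removes the regularization. A companion mollification of $\varphi$ itself can be invoked to guarantee that $\varphi(Z_s)\to\varphi(Z_{t_0})$ a.s.\ as $s\uparrow t_0$, so that the terminal value of $X_{t_0}$ is realized in the limit and the chain of equalities above is rigorously justified.
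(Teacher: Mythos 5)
Your overall architecture is the one the paper uses: represent the $K$-extension probabilistically, observe that $X_s=(\varphi(Z_s),\varphi^2(Z_s))$ is a martingale confined to $\Omega_{\tilde\mu}$ and terminating on the parabola $\{x_2=x_1^2\}$, invoke the result of~\cite{sz} that $\bel{B}_*(\,\cdot\,;\mu,f)$ is (minimal) locally concave with boundary value $f$, and close with It\^o's formula after a mollification that exploits the additive homogeneity of the domain and the buffer $\mu^2-\tilde\mu^2$. This is exactly the content of the paper's Lemma~\ref{ito} combined with Theorem~\ref{sz}.

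There is, however, one genuine gap, in the case $K=P$: your choice of driving process. You take $Z_s=(y_0+Y_s,\,t_0-s)$ with $Y_s$ the Cauchy process, so that $Z_s$ (and hence $X_s$) is a \emph{jump} martingale. For a process with jumps, the It\^o formula contains, besides the Hessian term, the compensated jump term
$U(X_{s^-}+\Delta X_s)-U(X_{s^-})-\nabla U(X_{s^-})\cdot\Delta X_s$, and signing it requires $U$ to be concave on the segment joining $X_{s^-}$ and $X_{s^-}+\Delta X_s$. That segment need not lie in any convex subset of $\Omega_\mu$: if $(a_1,a_2)$ and $(b_1,b_2)$ both lie in $\Omega_{\tilde\mu}$, a point of the chord can violate $x_2\le x_1^2+\mu^2$ by as much as $\tfrac14(a_1-b_1)^2$, and since the Cauchy process has jumps of all sizes and $\varphi$ is an arbitrary BMO function, $|a_1-b_1|=|\varphi(Z_{s^-}+\Delta)-\varphi(Z_{s^-})|$ is not controllable. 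So \emph{local} concavity -- which is all that~\cite{sz} provides, and all that can be true on the non-convex $\Omega_\mu$ -- does not sign the jump term, and the argument breaks precisely at the point you identified as the chief obstacle. The fix is the paper's: for $K=P$, drive $Z$ by the $(n+1)$-dimensional Brownian motion started at $(y_0,t_0)$ and stopped when the last coordinate hits $0$; its exit distribution on $L_0$ is the Poisson kernel, the paths of $X_s$ are continuous, and only the Hessian term appears. (For $K=H$ your Brownian choice coincides with the paper's and is fine.) Two smaller points: the mollification does not ``retain the boundary values'' -- the paper shifts $U$ upward into the domain and recovers the inequality by pointwise convergence plus Fatou -- and one must also deal with possible discontinuity of $\bel{B}_*$ at the lower boundary, where $X_{t_0}$ lands; the paper handles this with the auxiliary functions $V_j$ of~\eqref{vn}.
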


\begin{theorem}
\label{main_ap}
For any non-negative measurable function $f$ on $(0,\infty)$ and any numbers
$\delta, \tilde\delta, p$ such that $1<\tilde\delta<\delta$ and $1<p\le\infty,$ we have 
\eq[ap_case]{
\bel{D}_{p,{\scriptscriptstyle K}}(x; \tilde\delta,f,n)\le  \bel{D}_p(x; \delta,f),~x\in\Omega_{p,\tilde\delta}.
}
\end{theorem}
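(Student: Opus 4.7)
My plan parallels that for Theorem~\ref{main_bmo}: I would produce a continuous $\mathbb{R}^2$-valued martingale $X$ in $\Omega_{p,\tilde\delta}$ with $X_0=x$ and terminal value on the lower boundary $x_1x_2^{p-1}=1$ (where $\bel{D}_p$ reduces to $f$), and combine this with the local concavity of $\bel{D}_p(\,\cdot\,;\delta,f)$ on $\Omega_{p,\delta}$ -- granted by~\cite{sz} -- to turn $\bel{D}_p(X_\cdot;\delta,f)$ into a supermartingale. Taking expectation and passing to the terminal time should then yield the desired pointwise estimate, and the sup over $w$ gives~\eqref{ap_case}.

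Fix $z_0=(y_0,t_0)$ and any admissible $w$ on $\rn$ with $[w]_p^{\scriptscriptstyle K}\le\tilde\delta$, $w(z_0)=x_1$, and $w^{-1/(p-1)}(z_0)=x_2$. In the heat case, let $Y_s$ be an appropriately time-scaled Brownian motion on $\rn$ started at $y_0$; since the $K$-extensions of $w$ and of $w^{-1/(p-1)}$ are space--time harmonic for the heat operator, the process
$$
X_s=\bigl(w(Y_s,\,t_0-s),~w^{-1/(p-1)}(Y_s,\,t_0-s)\bigr),\qquad 0\le s\le t_0,
$$
is a continuous $\mathbb{R}^2$-valued martingale with $X_0=x$. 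The pointwise bound $[w]_p^{\scriptscriptstyle K}\le\tilde\delta$ forces $X_s\in\Omega_{p,\tilde\delta}$ for every $s$, and as $s\uparrow t_0$ one has $X_s\to\bigl(w_0(Y_{t_0}),\,w_0(Y_{t_0})^{-1/(p-1)}\bigr)$, a point on the curve $x_1x_2^{p-1}=1$. In the Poisson case the Poisson extension is harmonic in $\mathbb{R}^{n+1}_+$, so I would instead take an $(n{+}1)$-dimensional Brownian motion $Z_\tau$ started at $z_0$ and stopped upon hitting $\{t=0\}$: then $X_\tau=\bigl(w(Z_\tau),\,w^{-1/(p-1)}(Z_\tau)\bigr)$ is again a continuous $\mathbb{R}^2$-valued martingale with the same properties. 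The case $p=\infty$ proceeds identically with $\log w$ and $\Omega_{\infty,\delta}$ in place of $w^{-1/(p-1)}$ and $\Omega_{p,\delta}$.

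After a preliminary mollification -- the multiplicative invariance $w\mapsto\lambda w$ of $[\,\cdot\,]_p^{\scriptscriptstyle K}$ and of $\Omega_{p,\delta}$ lets me smooth $w$ while keeping the $A_p^{\scriptscriptstyle K}$-characteristic below $\delta$ -- and a matching smoothing of $\bel{D}_p(\,\cdot\,;\delta,f)$ that preserves local concavity on $\Omega_{p,\delta'}$ for some $\tilde\delta<\delta'<\delta$, the Hessian of $\bel{D}_p$ is negative semidefinite on $\Omega_{p,\tilde\delta}$. It\^o's formula then gives that $\bel{D}_p(X_\cdot;\delta,f)$ is a supermartingale, and taking expectation as $s\uparrow t_0$ yields
$$
\bel{D}_p(x;\delta,f)\ge \mathbb{E}\bigl[f\bigl(w_0(\,\cdot\,)\bigr)\bigr]=(f\circ w)(z_0).
$$

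The chief obstacle I anticipate is the joint regularization of $w$ and $\bel{D}_p$: the abstract Bellman function is only known to be locally concave (not $C^2$), and $w$ is merely measurable, yet I need enough regularity for an honest It\^o application. The gap $\delta-\tilde\delta>0$ together with the multiplicative homogeneity of the $A_p$ setting should provide the slack to carry out both approximations consistently while keeping the martingale safely inside $\Omega_{p,\delta'}$; once this is in place, the supermartingale step and the boundary passage $s\uparrow t_0$ should be routine. A secondary subtlety is verifying that the convergence $X_s\to (w_0(Y_{t_0}),w_0(Y_{t_0})^{-1/(p-1)})$ is compatible with passing the supermartingale inequality to the terminal time, which uses the non-negativity of $f$ and Fatou.
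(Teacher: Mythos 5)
Your proposal follows essentially the same route as the paper: the probabilistic representation of the semigroup, the It\^o/supermartingale argument using the local concavity of $\bel{D}_p(\,\cdot\,;\delta,f)$ from~\cite{sz}, a mollification of the Bellman function exploiting the multiplicative homogeneity of $\Omega_{p,\delta}$ and the gap $\tilde\delta<\delta$, and Fatou at the terminal time. The only superfluous step is the smoothing of $w$ itself: its $K$-extension is automatically $C^\infty$ in $\mathbb{R}^{n+1}_+$, so the regularization is needed only for $U$, exactly as carried out in the paper.
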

The proofs of these theorems are given in Section~\ref{proofs}. Their practical importance is captured by the following immediate corollaries, of which we prove the first one; the proof of the second one is completely analogous.
\begin{corollary}
\label{cor_bmo}
If for some function $C_f\colon [0,\infty)\to[0,\infty]$ the estimate
\eq[es1]{
\av{f(\eta-\av{\eta}I)}I\le C_{f}(\mu)
}
holds for any interval $I$ and any $\eta\in \BMO(I)$ with $\|\eta\|^{}_{*,I}\le\mu,$ then the estimate
\eq[es2]{
f\big(\varphi-\varphi(z)\big)(z)\le C_{f}(\mu)
}
holds for all $\varphi\in\BMO(\rn)$ with $\|\varphi\|_{\scriptscriptstyle K}<\mu$ and all $z\in\mathbb{R}^{n+1}_+.$ Thus, all estimates~\eqref{es2} are dimension-free.
\end{corollary}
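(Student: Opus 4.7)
The plan is to derive Corollary~\ref{cor_bmo} directly from Theorem~\ref{main_bmo} via a translation trick: reformulate both the hypothesis and the conclusion as pointwise bounds on the Bellman functions $\bel{B}_*$ and $\bel{B}_{\scriptscriptstyle K}$ evaluated at a \emph{shifted} test function, and then chain the inequalities. The pointwise shift is needed because~\eqref{es1} and~\eqref{es2} subtract the mean/$K$-value inside $f$, whereas the Bellman suprema in~\eqref{bbmou}--\eqref{bbmok} involve $f\circ\eta$ and $f\circ\varphi$ directly.

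For $a\in\mathbb{R},$ introduce the shifted non-negative function $f_a(t):=f(t-a).$ First I would rewrite the hypothesis: by definition~\eqref{bbmou}, any admissible $\eta$ for the supremum $\bel{B}_*(x;\mu,f_{x_1})$ has $\av{\eta}I=x_1,$ so $\av{f_{x_1}\circ\eta}I=\av{f(\eta-\av{\eta}I)}I,$ and~\eqref{es1} gives
$$
\bel{B}_*(x;\mu,f_{x_1})\le C_f(\mu)\quad\text{for all } x=(x_1,x_2)\in\Omega_\mu.
$$

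Next I would translate the conclusion~\eqref{es2} in the same way. Fix $\varphi\in\BMO(\rn)$ with $\|\varphi\|_{\scriptscriptstyle K}<\mu$ and $z\in\mathbb{R}_+^{n+1},$ and set $x_1:=\varphi(z),$ $x_2:=\varphi^2(z).$ Pick an intermediate parameter $\tilde\mu$ with $\|\varphi\|_{\scriptscriptstyle K}\le\tilde\mu<\mu;$ such a $\tilde\mu$ exists precisely because the hypothesis on $\|\varphi\|_{\scriptscriptstyle K}$ is strict. The Jensen-type inequality for $K$-averages gives $x_2\ge x_1^2,$ and the definition of $\|\varphi\|_{\scriptscriptstyle K}$ gives $x_2\le x_1^2+\tilde\mu^2,$ so $x\in\Omega_{\tilde\mu}.$ Since $\varphi$ is an admissible competitor for the supremum defining $\bel{B}_{\scriptscriptstyle K}(x;\tilde\mu,f_{x_1},n),$ we obtain
$$
f(\varphi-\varphi(z))(z)=(f_{x_1}\circ\varphi)(z)\le \bel{B}_{\scriptscriptstyle K}(x;\tilde\mu,f_{x_1},n).
$$

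Finally, I would invoke Theorem~\ref{main_bmo} with the function $f_{x_1}$ and the parameters $\tilde\mu<\mu$ to conclude
$$
\bel{B}_{\scriptscriptstyle K}(x;\tilde\mu,f_{x_1},n)\le \bel{B}_*(x;\mu,f_{x_1})\le C_f(\mu),
$$
which yields~\eqref{es2}. There is no real obstacle here; the entire content is in Theorem~\ref{main_bmo}, and the corollary is essentially a matter of bookkeeping. The only points requiring attention are (i) the translation of $f$ to $f_{x_1}=f(\,\cdot\,-x_1)$ so that the Bellman definitions literally produce the expressions appearing in~\eqref{es1}--\eqref{es2}, and (ii) the selection of an intermediate $\tilde\mu$ to bridge the strict inequality $\|\varphi\|_{\scriptscriptstyle K}<\mu$ with the strict inequality $\tilde\mu<\mu$ required by Theorem~\ref{main_bmo}.
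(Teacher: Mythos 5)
Your proof is correct and follows essentially the same route as the paper: bound the left-hand side of~\eqref{es2} by $\bel{B}_{\scriptscriptstyle K}$, apply Theorem~\ref{main_bmo}, and observe that~\eqref{es1} bounds $\bel{B}_*$ by $C_f(\mu)$. The only cosmetic difference is that you translate $f$ to $f_{x_1}$ while the paper instead translates the test function to $\varphi-\varphi(z)$ and evaluates the Bellman functions at $(0,\varphi^2(z)-\varphi(z)^2)$; both bookkeeping devices are equivalent.
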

\begin{proof}
Inequality $\eqref{es1}$ is equivalent to the inequality $\bel{B}_*(0,x_2;\,\mu, f)\le C_f(\mu)$ for any $0\le x_2\le\mu^2,$ thus, by~\eqref{bmo_case} we have
\begin{align*}
f\big(\varphi-\varphi(z)\big)(z)&\le \bel{B}_{\scriptscriptstyle K}\big(0, \varphi^2(z)-\varphi(z)^2;\,\|\varphi\|_{\scriptscriptstyle K}, f, n\big)\\
&\le \bel{B}_{*}\big(0, \varphi^2(z)-\varphi(z)^2;\,\mu, f\big)\le C_f(\mu).\qedhere
\end{align*}
\end{proof}
\begin{corollary}
\label{cor_ap}
If for some function $E_f\colon [1,\infty)\to[0,\infty]$ the estimate 
\eq[es3]{
\Big\langle f\Big(\frac v{\av{v}I}\Big)\Big\rangle_I \le E_{f}(\delta)
}
holds for any interval $I$ and any $v\in A_p(I)$ with $[v]_{p,I}\le\delta,$ then the estimate
\eq[es4]{
f\Big(\frac w{w(z)}\Big)(z) \le E_{f}(\delta)
}
holds for all $w\in A_p^K(\rn)$ with $[w]_{p}^{\scriptscriptstyle K}<\delta$ and all $z\in\mathbb{R}^{n+1}_+.$ Thus, all such estimates are dimension-free.
\end{corollary}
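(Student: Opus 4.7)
The plan is to follow the template of the proof of Corollary~\ref{cor_bmo} just given, substituting the additive shift $\eta\mapsto\eta-\av{\eta}I$ with the multiplicative rescaling $v\mapsto v/\av{v}I$ (and, correspondingly, $w\mapsto w/w(z)$ on the semigroup side), and invoking Theorem~\ref{main_ap} in place of Theorem~\ref{main_bmo}. I will describe the argument for $1<p<\infty$; the case $p=\infty$ differs only in bookkeeping, replacing the rescaled second coordinate with $\av{\log\tilde v}I=\av{\log v}I-\log\av{v}I$ and using the domain $\Omega_{\infty,\delta}$ from~\eqref{omainf}.

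First, I would recast the hypothesis~\eqref{es3} as a ceiling on $\bel{D}_p$. Since $[cv]_{p,I}=[v]_{p,I}$ and $\av{cv}I=c\av{v}I$ for any $c>0$, setting $\tilde v:=v/\av{v}I$ yields a weight with $\av{\tilde v}I=1$ and $[\tilde v]_{p,I}\le\delta$, so~\eqref{es3} becomes $\av{f(\tilde v)}I\le E_f(\delta)$. In the language of~\eqref{bapu}, this is precisely
\[
\bel{D}_p(1,y;\delta,f)\le E_f(\delta)\quad\text{for every } y \text{ with } 1\le y^{p-1}\le\delta.
\]

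Next, I would rescale the semigroup side symmetrically. Setting $\alpha:=w(z)$ and $\tilde w:=w/\alpha$, the multiplicative homogeneity of $A_p^{\scriptscriptstyle K}$ gives $\tilde w(z)=1$ and $[\tilde w]_p^{\scriptscriptstyle K}=[w]_p^{\scriptscriptstyle K}=:\tilde\delta<\delta$, while $\tilde w^{-1/(p-1)}(z)=y$, where $y:=\alpha^{1/(p-1)}\,w^{-1/(p-1)}(z)$. The product $y^{p-1}=w(z)\,w^{-1/(p-1)}(z)^{p-1}$ lies in $[1,\tilde\delta]$---the upper bound from the definition of $[w]_p^{\scriptscriptstyle K}$ and the lower from Jensen's inequality applied to the $K$-extension of $w^{-1/(p-1)}$---so $(1,y)\in\Omega_{p,\tilde\delta}$. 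Definition~\eqref{bapk} then gives
\[
f\Big(\tfrac{w}{w(z)}\Big)(z)=(f\circ\tilde w)(z)\le \bel{D}_{p,{\scriptscriptstyle K}}(1,y;\tilde\delta,f,n).
\]

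Finally, applying Theorem~\ref{main_ap} with $\tilde\delta<\delta$ bounds the right-hand side by $\bel{D}_p(1,y;\delta,f)$, which by the reformulated hypothesis is at most $E_f(\delta)$; this yields~\eqref{es4}. The proof is structurally identical to that of Corollary~\ref{cor_bmo}, and I do not anticipate a genuine obstacle. The only bookkeeping to double-check is that the rescaled point $(1,y)$ lands inside $\Omega_{p,\tilde\delta}$, which is automatic from the $A_p^{\scriptscriptstyle K}$ characteristic bound together with Jensen's inequality.
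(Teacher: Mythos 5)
Your argument is correct and is exactly the ``completely analogous'' proof the paper omits: you translate the hypothesis into the bound $\bel{D}_p(1,y;\delta,f)\le E_f(\delta)$ via multiplicative rescaling, note that $(1,y)$ with $y=w(z)^{1/(p-1)}w^{-1/(p-1)}(z)$ lies in $\Omega_{p,\tilde\delta}$ by the characteristic bound and Jensen, and chain the definition of $\bel{D}_{p,{\scriptscriptstyle K}}$ with Theorem~\ref{main_ap}, mirroring the additive-shift proof of Corollary~\ref{cor_bmo}. No gaps.
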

\section{Estimates for $\BMO_*(\rn)$ and the John--Nirenberg constant}
\label{appl}
In this section, we will use $\lesssim$ and $\gtrsim$ in inequalities that hold up to an absolute, dimension-free multiplicative constant. We first establish an explicit dimensional bound on $\|\varphi\|_{_H}$ in terms of $\|\varphi\|_*$ and a pair of simple inequalities relating heat averages to Lebesgue averages over balls. The general result for $\BMO_*(\rn)$ is Corollary~\ref{cor_rn}, while a new dimensional bound for the John--Nirenberg constant of $\BMO_*(\rn)$ is given in~Theorem~\ref{thjn}. A note about notation: in Propositions~\ref{prop} and~\ref{prop1}, as well as in Corollary~\ref{cor_rn}, all extensions of the form $\varphi(z)$ are heat extensions. In the rest of the section, $K$ can be taken to be either $H$ or $P,$ unless expressly specified.
\begin{prop}
\label{prop}
If $\varphi\in\BMO(\rn),$ then
$$
\|\varphi\|_{_H}\lesssim \sqrt n\,\|\varphi\|_*.
$$
\end{prop}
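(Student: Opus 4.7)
The plan is to use the radial symmetry of the heat kernel to rewrite $\varphi_H(z)$ and $\varphi^2_H(z)$ as averages of Lebesgue ball-averages against an auxiliary probability measure $\mu_t,$ and then decompose the variance $\varphi^2_H(z)-\varphi_H(z)^2$ into a ``within-ball'' piece controlled by $\|\varphi\|_*^2$ and a ``between-radii'' piece from which the dimensional factor $\sqrt n$ will emerge.

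The starting point is a layer-cake identity. Writing $H_t(y)=h_t(|y|)$ and using the monotonicity of $h_t,$
\[
H_t(y)=\int_0^\infty \frac{\mathbf{1}_{B(0,r)}(y)}{|B(0,r)|}\,d\mu_t(r),
\]
where $d\mu_t(r)=\bigl(-h_t'(r)\bigr)|B(0,r)|\,dr$ is a probability measure on $(0,\infty)$ (the mass equals $\int_{\rn} H_t=1$). A short computation shows that under the change of variables $u=r^2/(4t),$ $\mu_t$ pushes forward to the $\Gamma(n/2+1,1)$ distribution. By translation invariance we may assume $z=(0,t);$ then for any sufficiently integrable $g,$ $g_H(z)=\int_0^\infty \av{g}{B(0,r)}\,d\mu_t(r).$ Applying this to $g=\varphi$ and $g=\varphi^2$ and writing $A(r)=\av{\varphi}{B(0,r)},$
\[
\varphi^2_H(z)-\varphi_H(z)^2=\int_0^\infty\!\bigl(\av{\varphi^2}{B(0,r)}-A(r)^2\bigr)\,d\mu_t(r)+\operatorname{Var}_{\mu_t}(A)\le \|\varphi\|_*^2+\operatorname{Var}_{\mu_t}(A),
\]
where the first term is controlled by the definition of $\|\varphi\|_*$ applied to each concentric ball.

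It remains to prove $\operatorname{Var}_{\mu_t}(A)\lesssim n\|\varphi\|_*^2.$ The Cauchy--Schwarz estimate $|A(R)-A(\rho)|\le (R/\rho)^{n/2}\|\varphi\|_*$ for concentric balls $B(0,\rho)\subset B(0,R),$ telescoped along a geometric progression with optimal ratio $\alpha=e^{2/n},$ yields the logarithmic growth
\[
|A(R)-A(\rho)|\le \tfrac{en}{2}\log(R/\rho)\,\|\varphi\|_*,\qquad 0<\rho<R.
\]
Choose $r_0=2\sqrt{t\alpha_*},$ where $\log\alpha_*$ equals the $\mu_t$-mean of $\log u$ (a value near $n/2+1$). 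Since $\operatorname{Var}_{\mu_t}(A)\le E_{\mu_t}[(A(r)-A(r_0))^2],$ the task reduces to the moment $E_{\mu_t}[\log^2(r/r_0)],$ which in the $u$-variable equals $\tfrac14\operatorname{Var}(\log U)$ for $U\sim\Gamma(n/2+1,1).$ Standard Gamma asymptotics give $\operatorname{Var}(\log U)=\psi'(n/2+1)=O(1/n),$ so $\operatorname{Var}_{\mu_t}(A)\lesssim n^2\cdot (1/n)\,\|\varphi\|_*^2=n\|\varphi\|_*^2.$ Taking square roots and the supremum over $z$ then finishes the proof.

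The main subtlety lies in the tight cancellation in the last step: the factor $n$ produced by the logarithmic growth of concentric ball averages is compensated exactly by the $1/n$ concentration of $\log U$ for the $\Gamma$ law, and both effects are essentially sharp. Consequently, this argument delivers precisely the $\sqrt n$ rate and no better, which is consistent with the paper's further claim on the John--Nirenberg constant.
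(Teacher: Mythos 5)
Your proof is correct and takes essentially the same route as the paper's: the layer-cake mixture of normalized ball indicators is exactly the paper's integration by parts in the radial variable, the within-ball term is handled identically, and the $\sqrt n$ arises in both arguments from the $n\log(r_1/r_2)$ growth of concentric ball averages played against the trigamma concentration $\psi'(\tfrac n2+1)=O(1/n)$ of the radial Gamma law. One small nitpick: the telescoped estimate should read $|A(R)-A(\rho)|\lesssim \bigl(n\log(R/\rho)+1\bigr)\|\varphi\|_*$ because of the ceiling in the number of geometric steps, but the extra additive constant contributes only $O(\|\varphi\|_*^2)$ to the variance and does not affect the conclusion.
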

\begin{proof}
Let $z_0=(0,\frac 14),$ where $0$ is the origin in $\rn.$ Due to scale invariance, it suffices to prove the inequality
$$
\Delta:=\varphi^2(z_0)-\varphi(z_0)^2\lesssim n \,\|\varphi\|^2_*.
$$
Let $h(x)=H_{1/4}(x)=\frac1{\pi^{n/2}}\,e^{-|x|^2}.$ Then
$$
\Delta=\int_{\rn}\int_{\rn} h(x)h(y)\big(\varphi(x)-\varphi(y)\big)^2\,dx\,dy.
$$
Integration by parts gives
\begin{align*}
\Delta&=\frac4{\pi^n}\,\int_0^\infty\int_0^\infty r_1r_2e^{-r_1^2-r_2^2}\Big[\int_{B_{r_1}}\int_{B_{r_2}}\big(\varphi(x)-\varphi(y)\big)^2\,dx\,dy\Big]\,dr_1\,dr_2,
\end{align*}
where $B_r$ denotes the ball of radius $r$ centered at $0.$ Now,
\begin{align*}
\int_{B_{r_1}}\int_{B_{r_2}}\big(\varphi(x)-\varphi(y)\big)^2\,dx\,dy&=
|B_{r_1}||B_{r_2}|\big(\av{\varphi^2}{B_{r_1}}+\av{\varphi^2}{B_{r_2}}-2\av{\varphi}{B_{r_1}}\av{\varphi}{B_{r_2}}\big)\\
&\le(V_n)^2r_1^nr_2^n\big(2\|\varphi\|^2_*+(\av{\varphi}{B_{r_1}}-\av{\varphi}{B_{r_2}})^2\big)\\
&\lesssim \|\varphi\|^2_*(V_n)^2r_1^nr_2^n \left(1+n^2\log^2\Big(\frac{r_1}{r_2}\Big)\right),
\end{align*}
where $V_n:=\frac{\pi^{n/2}}{\Gamma(\frac n2+1)}$ is the volume of the unit ball in $\rn$ and on the last step we used the elementary estimate 
$$
|\av{\varphi}{B_{r_1}}-\av{\varphi}{B_{r_2}}|\lesssim n\Big|\log\Big(\frac{r_1}{r_2}\Big)\Big|\,\|\varphi\|_*.
$$
Putting everything together, we have
$$
\Delta\lesssim\frac{n^2\|\varphi\|^2_*}{\big(\Gamma(\frac n2+1)\big)^2}\,\int_0^\infty\int_0^\infty r_1^{n+1}r_2^{n+1}e^{-r_1^2-r_2^2}\log^2\Big(\frac{r_1}{r_2}\Big)\,dr_1\,dr_2.
$$
The last integral can be seen to equal $\frac18\,(\Gamma(\frac n2+1))^2\,\Psi_1(\frac n2+1),$ where $\Psi_1(z):=\frac{d^2}{dz^2}\log\Gamma(z)$ is the first polygamma function. Since $\Psi_1(\frac n2+1)=O(\frac1n)$ as $n\to\infty,$ the proof is complete.
\end{proof}

\begin{prop}
\label{prop1}
If $B=B(x,r)$ is the ball with radius $r$ centered at a point $x\in\rn,$ then there exists a point $z_B\in\mathbb{R}_{n+1}^+$ such that for any non-negative function $g$ on $\rn$ we have 
\eq[pr1]{
\av{g}B\lesssim \sqrt n\,g(z_B).
}
Furthermore, if $\varphi\in\BMO(\rn),$ then
\eq[pr2]{
|\varphi(z_B)-\av{\varphi}B|\lesssim (\log n+1)\,\|\varphi\|_{\scriptscriptstyle H}.
}
\end{prop}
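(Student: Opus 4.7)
The plan is to prove the two inequalities separately: \eqref{pr1} is a direct Gaussian computation, while~\eqref{pr2} comes from an interpolation trick that combines~\eqref{pr1} with the John--Nirenberg-type moment bound for $\BMO_{\scriptscriptstyle H}$ yielded by Corollary~\ref{cor_bmo}.

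For~\eqref{pr1}, I would reduce by translation and parabolic rescaling to $B = B(0,1)$ and set $z_B := (0,\tfrac{1}{2n})$. The heat kernel at this scale is
$$H_{1/(2n)}(y) = \Big(\frac{n}{2\pi}\Big)^{n/2} e^{-n|y|^2/2},$$
and since it is radially decreasing we have $H_{1/(2n)}(y) \ge (n/(2\pi e))^{n/2}$ for $|y|\le 1$. Using Stirling's formula, $(V_n\sqrt n)^{-1}$ is of the same order, hence $H_{1/(2n)}(y) \gtrsim (V_n\sqrt n)^{-1}$ on $B(0,1)$. Integrating this pointwise bound against any $g\ge 0$ yields
$$g(z_B) \,=\, \int_{\rn} H_{1/(2n)}(y)\, g(y)\,dy \,\ge\, \int_{B(0,1)} H_{1/(2n)}(y)\, g(y)\,dy \,\gtrsim\, \frac{\av{g}{B(0,1)}}{\sqrt n},$$
and rescaling gives the general case.

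For~\eqref{pr2}, the key idea is to absorb the $\sqrt n$ loss of~\eqref{pr1} by passing to a high moment and then optimizing the exponent. The classical John--Nirenberg inequality gives, for any interval $I$, $\eta \in \BMO(I)$, and $k\ge1$, the moment bound
$$\av{|\eta - \av{\eta}{I}|^k}{I} \le c_1\,\Gamma(k+1)\, c_2^k\, \|\eta\|_{*,I}^k$$
for absolute constants $c_1, c_2 > 0$. Corollary~\ref{cor_bmo} with $f(s)=|s|^k$ transfers this to the heat extension:
$$(|\varphi-\varphi(z_B)|^k)(z_B) \,\le\, c_1\,\Gamma(k+1)\, c_2^k\, \|\varphi\|_{\scriptscriptstyle H}^k.$$
Applying~\eqref{pr1} to $g = |\varphi - \varphi(z_B)|^k$ and then H\"older's inequality yields
$$|\av{\varphi}{B} - \varphi(z_B)| \,\le\, \av{|\varphi - \varphi(z_B)|^k}{B}^{1/k} \,\lesssim\, n^{1/(2k)}\, \Gamma(k+1)^{1/k}\, \|\varphi\|_{\scriptscriptstyle H} \,\lesssim\, k\, n^{1/(2k)}\, \|\varphi\|_{\scriptscriptstyle H},$$
where I used $\Gamma(k+1)^{1/k} \lesssim k$. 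Choosing $k = \max(1,\lceil\log n\rceil)$ makes $n^{1/(2k)} = O(1)$ and produces the claimed bound $\lesssim (\log n + 1)\|\varphi\|_{\scriptscriptstyle H}$.

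The main obstacle — and the reason the naive application of~\eqref{pr1} with $k=1$ only gives the inferior $\sqrt n\, \|\varphi\|_{\scriptscriptstyle H}$ — is finding the right moment $k$ to balance the polynomial $\sqrt n$ factor from~\eqref{pr1} against the factorial growth $\Gamma(k+1)$ coming from John--Nirenberg. The observation that the optimal choice $k\sim\log n$ converts the polynomial loss of~\eqref{pr1} into the merely logarithmic loss of~\eqref{pr2} is the crux of the argument.
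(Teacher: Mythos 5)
Your proof of \eqref{pr1} is essentially identical to the paper's: the same choice $t_B=r^2/(2n)$, the same pointwise lower bound $H_{t_B}\ge (n/(2\pi e))^{n/2}r^{-n}$ on the ball, and the same Stirling comparison with $V_n$. For \eqref{pr2} you take a genuinely different, and correct, route. The paper chooses $f(s)=e^{|s|}$: it transfers the sharp one-dimensional bound $\av{e^{|\eta-\av{\eta}I|}}I\le (1-\|\eta\|_{*,I})^{-1}$ of Slavin--Vasyunin through Corollary~\ref{cor_bmo}, then applies Jensen's inequality with the logarithm, so that the $\sqrt n$ from \eqref{pr1} enters only as $\log\sqrt n$. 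You instead choose $f(s)=|s|^k$, transfer the classical moment bound $\av{|\eta-\av{\eta}I|^k}I\le c_1\Gamma(k+1)c_2^k\|\eta\|_{*,I}^k$, and optimize $k\sim\log n$ so that $n^{1/(2k)}=O(1)$ while $\Gamma(k+1)^{1/k}\lesssim k\lesssim\log n+1$. The two are morally dual (the exponential is the generating function of the moments), and both hinge on the same two ingredients, Corollary~\ref{cor_bmo} and \eqref{pr1}; the paper's version is shorter because the logarithm absorbs the $\sqrt n$ in one stroke, whereas yours is slightly more self-contained in that it needs only the classical John--Nirenberg moment bounds rather than the sharp exponential estimate. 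One minor point to tidy up in either approach: Corollary~\ref{cor_bmo} is stated for $\|\varphi\|_{\scriptscriptstyle H}<\mu$ strictly, so you should apply it with $\mu=(1+\ve)\|\varphi\|_{\scriptscriptstyle H}$ and let $\ve\to0$, which is harmless since your $C_f(\mu)=c_1\Gamma(k+1)c_2^k\mu^k$ is continuous in $\mu$.
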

\begin{proof}
Let $t_B=\frac{r^2}{2n}$ and $z_B=(x,t_B).$ Then, for $y\in B,$ 
$$
H_{t_B}(x-y)\ge \frac1{r^n}\,\Big(\frac n{2\pi e}\Big)^{n/2} =\frac{V_n}{|B|}\,\Big(\frac n{2\pi e}\Big)^{n/2}
=\frac1{|B|}\,\frac1{\Gamma(\frac n2+1)}\Big(\frac n{2e}\Big)^{n/2}\gtrsim \frac1{\sqrt n}\,\frac1{|B|},
$$
where the last inequality follows from Stirling's formula. This proves~\eqref{pr1}.

The proof of~\eqref{pr2} is more interesting. It was shown in~\cite{sv1} (cf. Th. 2.5 of that paper) that for any interval $I$ and any function $\eta\in \BMO(I)$ with $\|\eta\|_{*,I}<1$ one has
$$
\av{e^{|\eta-\av{\eta}{\scriptscriptstyle I}|}}I\le \frac1{1-\|\eta\|_{*,I}}.
$$
By Corollary~\ref{cor_bmo} with $f(s)=e^{|s|}$ and $C_f(s)=\frac1{1-s},$ if $\psi\in\BMO(\rn)$ and $\|\psi\|_{\scriptscriptstyle H}=\frac12,$ then
\eq[pr3]{
e^{|\psi-\psi(z_{\scriptscriptstyle B})|}(z_B)\le \frac1{1-\|\psi\|_{\scriptscriptstyle H}}=2.
}
Hence, for some absolute constant $C,$
\begin{align*}
|\psi(z_B)-\av{\psi}B|&\le \av{|\psi-\psi(z_B)|}B\le \log\Big(\av{e^{|\psi-\psi(z_{\scriptscriptstyle B})|}}B\Big) \\
&\le C+\log\Big(\sqrt n\,e^{|\psi-\psi(z_{\scriptscriptstyle B})|}(z_B)\Big)
\le C+\log\big(2\sqrt n\big)\lesssim \log n+1,
\end{align*}
where we first used the triangle inequality, then Jensen's inequality, then~\eqref{pr1}, and, finally,~\eqref{pr3}. Replacing $\psi$ with $\frac12\,\frac{\varphi}{\|\varphi\|_{\scriptscriptstyle H}},$ we obtain~\eqref{pr2}
\end{proof}

We now give two general inequalities for integral functionals on $\BMO_*(\rn)$ in the spirit of~Corollary~\ref{cor_bmo}. The first one is more transparent, but it involves the difference $\varphi-\varphi(z_B)$ instead of the usual $\varphi-\av{\varphi}B.$ The second inequality does give estimates in terms of $\varphi-\av{\varphi}B,$ but it requires partial knowledge of the one-dimensional Bellman function~$\bel{B}_*$ defined by~\eqref{bbmou}. Fortunately, such functions can now be computed for any functional $\av{f\circ\eta}I$ under mild regularity assumptions on $f;$ see~\cite{long1,long2}.

\begin{corollary}
\label{cor_rn}
If for some function $C_f\colon [0,\infty)\to[0,\infty]$ the estimate
\eq[es11]{
\av{f(\eta-\av{\eta}I)}I \le C_{f}(\mu)
}
holds for any interval $I$ and any $\eta\in \BMO(I)$ with $\|\eta\|^{}_{*,I}\le\mu,$ then the estimate
\eq[es21]{
\av{f(\varphi-\varphi(z_B))}B\lesssim \sqrt n\,C_{f}(\mu)
}
holds for all $\varphi\in\BMO(\rn)$ with $\|\varphi\|_*\le c\frac{\mu}{\sqrt n},$ where $c$ is an absolute constant, and all balls $B,$ with $z_B$ given by Proposition~\ref{prop1}.
Furthermore, for such $\varphi,$ 
\eq[es31]{
\av{f(\varphi-\av{\varphi}B)}B\lesssim \sqrt n \sup_{x\in \omega_\mu}\bel{B}_*(x; \mu,f),
}
where the Bellman function $\bel{B_*}$ is defined by~\eqref{bbmou} and
$$
\omega_\mu:=\{(x_1,x_2)\colon~|x_1|\le (\log n+1)\mu,~x_1^2\le x_2\le x_1^2+\mu^2\}.
$$
\end{corollary}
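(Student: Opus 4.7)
The plan is to combine the four ingredients developed earlier in this section. Proposition~\ref{prop} converts a bound on $\|\varphi\|_*$ into one on $\|\varphi\|_{\scriptscriptstyle H}$; Corollary~\ref{cor_bmo} and Theorem~\ref{main_bmo} (via the one-dimensional Bellman function $\bel{B}_*$) transfer integral estimates to the heat-extension setting; and Proposition~\ref{prop1} turns pointwise heat-extension information at the sampling point $z_B$ into ball-average information on $B$.

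For \eqref{es21}, first choose the absolute constant $c$ small enough that Proposition~\ref{prop} yields $\|\varphi\|_{\scriptscriptstyle H}<\mu$ whenever $\|\varphi\|_*\le c\mu/\sqrt n$. With this choice, Corollary~\ref{cor_bmo} applied to the heat kernel at $z=z_B$ gives the pointwise bound $f(\varphi-\varphi(z_B))(z_B)\le C_f(\mu)$. Then \eqref{pr1}, applied to the non-negative function $g=f\circ(\varphi-\varphi(z_B))$, converts this heat average at $z_B$ into the Lebesgue average over $B$, producing the advertised $\sqrt n$ factor.

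For \eqref{es31}, set $\tilde\varphi:=\varphi-\av{\varphi}B$, which has the same $\BMO_*$ and $\BMO_{\scriptscriptstyle H}$ norms as $\varphi$. Applying \eqref{pr1} to $g=f\circ\tilde\varphi$ again gives $\av{f(\tilde\varphi)}B\lesssim \sqrt n\,f(\tilde\varphi)(z_B)$, while the definition of $\bel{B}_{\scriptscriptstyle H}$ together with Theorem~\ref{main_bmo} yields
$$
f(\tilde\varphi)(z_B)\le \bel{B}_{\scriptscriptstyle H}\bigl(\tilde\varphi(z_B),\tilde\varphi^2(z_B);\|\varphi\|_{\scriptscriptstyle H},f,n\bigr)\le \bel{B}_*\bigl(\tilde\varphi(z_B),\tilde\varphi^2(z_B);\mu,f\bigr).
$$
It then remains to verify that the state point $(\tilde\varphi(z_B),\tilde\varphi^2(z_B))$ lies in the slab $\omega_\mu$. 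The parabolic containment $\tilde\varphi(z_B)^2\le\tilde\varphi^2(z_B)\le\tilde\varphi(z_B)^2+\|\varphi\|_{\scriptscriptstyle H}^2\le\tilde\varphi(z_B)^2+\mu^2$ is immediate from Jensen's inequality and the definition of $\|\cdot\|_{\scriptscriptstyle H}$, while the horizontal bound $|\tilde\varphi(z_B)|=|\varphi(z_B)-\av{\varphi}B|\lesssim(\log n+1)\|\varphi\|_{\scriptscriptstyle H}\le (\log n+1)\mu$ is precisely \eqref{pr2} applied to $\varphi$.

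The main obstacle is structural rather than computational: one cannot simply apply Corollary~\ref{cor_bmo} to the centred function $\tilde\varphi$ at $z=z_B$, because $\tilde\varphi(z_B)$ is generally nonzero, drifting by as much as $O((\log n+1)\mu)$. This mismatch between $\av{\varphi}B$ and $\varphi(z_B)$ is exactly why \eqref{es31} must be phrased through the full Bellman function $\bel{B}_*$ restricted to the slab $\omega_\mu$, rather than merely its trace on the fibre $x_1=0$ as in \eqref{es11}. Once this geometric observation is identified, everything else is routine bookkeeping with the inequalities already assembled.
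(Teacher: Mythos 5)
Your proof is correct and follows essentially the same route as the paper: Corollary~\ref{cor_bmo} plus \eqref{pr1} for \eqref{es21}, and the chain $\av{f(\tilde\varphi)}B\lesssim\sqrt n\,f(\tilde\varphi)(z_B)\le\sqrt n\,\bel{B}_{\scriptscriptstyle H}\le\sqrt n\,\bel{B}_*$ with the state point placed in $\omega_\mu$ for \eqref{es31}. The only loose step is the chain ``$\lesssim(\log n+1)\|\varphi\|_{\scriptscriptstyle H}\le(\log n+1)\mu$'': the implicit constant in \eqref{pr2} must be absorbed by further shrinking the absolute constant $c$ in the hypothesis $\|\varphi\|_*\le c\mu/\sqrt n$ (as the paper does explicitly), since $\|\varphi\|_{\scriptscriptstyle H}<\mu$ alone does not yield the stated bound $|x_1|\le(\log n+1)\mu$.
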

\begin{proof}
If $c$ is chosen so that $ \|\varphi\|_{\scriptscriptstyle H}<\mu$ (such a $c$ exists by Proposition~\ref{prop}), then~\eqref{es21} is immediate from~\eqref{es2} with $K=H$ and~\eqref{pr1} with $g=f\circ\varphi.$ 

To prove~\eqref{es31}, observe that 
\begin{align*}
\av{f(\varphi-\av{\varphi}B)}B&\lesssim \sqrt n f(\varphi-\av{\varphi}B)(z_B)\le
\sqrt n\,
\bel{B}_{\scriptscriptstyle H}\big(x; \|\varphi\|_{\scriptscriptstyle H},f,n\big)\le\sqrt n\,\bel{B}_*\big(x; \mu,f\big),
\end{align*}
where $x=(x_1,x_2):=(\varphi(z_B)-\av{\varphi}B,\,\varphi^2(z_B)-2\varphi(z_B)\av{\varphi}B+\av{\varphi}B^2).$ By Proposition~\ref{prop1}, $|x_1|\le \tilde{c} (\log n+1)\|\varphi\|_{\scriptscriptstyle H}$ for some constant $\tilde{c}.$ By adjusting $c$ in the assumption  $\|\varphi\|_*\le c\frac{\mu}{\sqrt n}$ we can ensure that $\tilde{c}\|\varphi\|_{\scriptscriptstyle H}\le\mu$ and, thus, that $|x_1|\le (\log n+1)\mu.$ In addition, 
$$
x_1^2\le x_2=x_1^2+\varphi^2(z_B)-\varphi(z_B)^2\le x_1^2+\|\varphi\|_{\scriptscriptstyle H}^2<x_1^2+\mu^2.
$$ 
Therefore, $x\in\omega_\mu$ and~\eqref{es31} follows.
\end{proof}
We come to the main result of this section. The John--Nirenberg inequality~\cite{jn} says that there exist constants $\ve_*>0$ and $C_*>0$ such that for any $\varphi\in\BMO(\rn),$ any ball $B\subset\rn,$ and any number $\lambda\ge0,$
\eq[jn*]{
\frac1{|B|}|\,\{t\in B\colon~|\varphi(t)-\av{\varphi}J|>\lambda\}|\le C_* e^{-\frac{\ve_*\lambda}{\|\varphi\|_*}}.
}
The analog of \eqref{jn*} for $\BMO_{\scriptscriptstyle K}$ is this: there exist constants $\ve_{\scriptscriptstyle K}>0$ and $C_{\scriptscriptstyle K}>0$ such that
\eq[jnK]{
\chi^{}_{\scriptstyle\{|\varphi-\varphi(z)|>\lambda\}}(z)\le C_{\scriptscriptstyle K} e^{-\frac{\ve^{}_{\scriptscriptstyle K}\lambda}{\|\varphi\|_K}},~\text{for all~}z\in\mathbb{R}^{n+1}_+.
}

All constants above depend on dimension. Let
$$
\ve^{\scriptscriptstyle\rm JN}_*(n)=\sup\{\ve_*>0\colon~\exists C_*>0\colon~\eqref{jn*}\text{~holds}\},
$$
$$
\ve^{\scriptscriptstyle\rm JN}_{\scriptscriptstyle K}(n)=\sup\{\ve_{\scriptscriptstyle K}>0\colon~\exists C_{\scriptscriptstyle K}>0\colon~\eqref{jnK}\text{~holds}\}.
$$
We call $\ve^{\scriptscriptstyle\rm JN}_*$ and $\ve^{\scriptscriptstyle\rm JN}_{\scriptscriptstyle K}$ the John--Nirenberg constant of $\BMO_*$ and $\BMO_{\scriptscriptstyle K},$ respectively. This constant can be similarly defined for any other choice of $\BMO$ norm, including BMO on all cubes. In all cases, the size of the constant -- and, specifically, its dimensional behavior -- are of interest; see~\cite{c, cys}. The classical proof of ~\eqref{jn*} yields exponential decay of $\ve^{\scriptscriptstyle\rm JN}_*(n)$ in $n;$ see~\cite{jn}. In~\cite{wik}, Wik showed that the analog of $\ve^{\scriptscriptstyle\rm JN}_*(n)$ for BMO on cubes decays no faster than $n^{-1/2}.$ His beautiful proof relied heavily on the product structure of the cube. To our knowledge, until now no results better than exponential have been known for $\BMO_*$ or $\BMO_{\scriptscriptstyle K}.$
\begin{theorem}
\label{thjn}
\eq[inK]{
\ve_{\scriptscriptstyle K}^{\scriptscriptstyle\rm JN}(n)\ge1.
}
Consequently,
\eq[in*]{
\ve_*^{\scriptscriptstyle\rm JN}(n)\gtrsim n^{-1/2}.
}
\end{theorem}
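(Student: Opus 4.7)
The plan is to obtain~\eqref{inK} directly from Corollary~\ref{cor_bmo} and the classical one-dimensional exponential integrability estimate, and then pass to~\eqref{in*} by combining~\eqref{inK} with Propositions~\ref{prop} and~\ref{prop1}. No new Bellman analysis is needed; both inequalities reduce to careful bookkeeping of the tools already developed in the paper.

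For~\eqref{inK}, the input is the inequality $\av{e^{|\eta-\av{\eta}{I}|}}{I}\le(1-\|\eta\|_{*,I})^{-1}$ from Theorem~2.5 of~\cite{sv1} (already cited in the proof of Proposition~\ref{prop1}). I would apply Corollary~\ref{cor_bmo} with $f(s)=e^{|s|}$ and $C_f(\mu)=(1-\mu)^{-1}$ for $\mu<1$, then fix $\varphi\in\BMO(\rn)$ and, for each $\ve\in(0,1)$, rescale: put $\tilde\varphi=\ve\varphi/\|\varphi\|_{\scriptscriptstyle K}$, so $\|\tilde\varphi\|_{\scriptscriptstyle K}=\ve<\mu$ for any $\mu\in(\ve,1)$. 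The corollary yields $e^{|\tilde\varphi-\tilde\varphi(z)|}(z)\le(1-\mu)^{-1}$; letting $\mu\downarrow\ve$ gives $e^{\ve|\varphi-\varphi(z)|/\|\varphi\|_{\scriptscriptstyle K}}(z)\le(1-\ve)^{-1}$. Because the $K$-extension is convolution against a probability kernel, Chebyshev's inequality turns this pointwise bound into $\chi_{\{|\varphi-\varphi(z)|>\lambda\}}(z)\le(1-\ve)^{-1}e^{-\ve\lambda/\|\varphi\|_{\scriptscriptstyle K}}$. Hence $\ve_{\scriptscriptstyle K}^{\scriptscriptstyle\rm JN}(n)\ge\ve$ for every $\ve<1$, which gives~\eqref{inK}.

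For~\eqref{in*}, fix a ball $B$ and let $z_B$ be the point supplied by Proposition~\ref{prop1}. Applying~\eqref{pr1} to $g=\chi_{\{|\varphi-\av{\varphi}{B}|>\lambda\}}$ gives $\frac{1}{|B|}|\{t\in B\colon|\varphi(t)-\av{\varphi}{B}|>\lambda\}|\lesssim\sqrt n\,\chi_{\{|\varphi-\av{\varphi}{B}|>\lambda\}}(z_B)$. When $\lambda\ge 2|\varphi(z_B)-\av{\varphi}{B}|$, the triangle inequality forces $\{|\varphi-\av{\varphi}{B}|>\lambda\}\subseteq\{|\varphi-\varphi(z_B)|>\lambda/2\}$, so~\eqref{inK} applied at $z_B$ together with $\|\varphi\|_{\scriptscriptstyle H}\lesssim\sqrt n\,\|\varphi\|_*$ (Proposition~\ref{prop}) bounds the right-hand side by $\sqrt n\,C_H\exp\bigl(-c\lambda/(\sqrt n\,\|\varphi\|_*)\bigr)$ for absolute constants $c,C_H>0$. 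In the complementary range $\lambda\le 2|\varphi(z_B)-\av{\varphi}{B}|\lesssim\sqrt n\,(\log n+1)\,\|\varphi\|_*$ (from~\eqref{pr2} combined with Proposition~\ref{prop}), the distribution function is trivially $\le 1$, which is $\le C_*(n)\,e^{-\ve_*\lambda/\|\varphi\|_*}$ with $\ve_*\sim n^{-1/2}$ and a suitably enlarged, polynomially $n$-dependent $C_*(n)$. The two ranges combine to give~\eqref{in*}.

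The main obstacle is the centering shift $|\varphi(z_B)-\av{\varphi}{B}|$, which by Propositions~\ref{prop} and~\ref{prop1} is bounded only by $\sqrt n\,(\log n+1)\,\|\varphi\|_*$; this is of the same order as $\ve_*^{-1}\|\varphi\|_*$ up to a $\log n$ factor, and therefore prevents a clean direct substitution. The resolution is that the constant $C_*$ in~\eqref{jn*} is permitted to depend on $n$: absorbing the short-$\lambda$ regime into a polynomially growing $C_*(n)$ neutralizes the shift while preserving the optimal rate $\ve_*\gtrsim n^{-1/2}$. All remaining steps -- Chebyshev's inequality, the triangle inequality, and the two norm comparisons -- are routine.
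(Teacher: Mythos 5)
Your proof is correct and follows essentially the same route as the paper: transfer a one-dimensional John--Nirenberg-type estimate to $\BMO_{\scriptscriptstyle K}$ via Corollary~\ref{cor_bmo}, then descend to balls using Propositions~\ref{prop} and~\ref{prop1}, absorbing the centering shift $|\varphi(z_B)-\av{\varphi}B|$ (and hence the small-$\lambda$ range) into the dimensional constant $C_*(n)$. The only real difference is in the first step: the paper transfers the Vasyunin--Volberg weak-form inequality directly, taking $f=\chi^{}_{\{|s|>\lambda\}}$ in Corollary~\ref{cor_bmo} and getting $\ve_{\scriptscriptstyle K}=1$ with $C_{\scriptscriptstyle K}=e$ in one stroke, whereas you transfer the integral-form bound $\av{e^{|\eta-\av{\eta}{\scriptscriptstyle I}|}}I\le(1-\|\eta\|_{*,I})^{-1}$ and then apply a Chernoff-type argument, obtaining $\ve_{\scriptscriptstyle K}=\ve$ with $C_{\scriptscriptstyle K}=(1-\ve)^{-1}$ for each $\ve<1$ and passing to the supremum --- both are legitimate since $\ve_{\scriptscriptstyle K}^{\scriptscriptstyle\rm JN}$ is defined as a supremum over admissible exponents.
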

\begin{proof}
It was shown in \cite{vv} that for an interval $I$ and $\eta\in\BMO(I),$ 
$$
\frac1{|I|}|\,\{t\in I\colon~|\eta(t)-\av{\eta}I|>\lambda\}|\le e^{1-\frac{\lambda}{\|\eta\|_{*,I}}},
$$
which is statement~\eqref{es1} with $f(s)=\chi^{}_{\{|s|
>\lambda\}}(s)$ and $C_f(s)=e^{1-\frac\lambda s}.$ Therefore, by Corollary~\ref{cor_bmo},~\eqref{jnK} holds with $C_{\scriptscriptstyle K}=e$ and $\ve_{\scriptscriptstyle K}=1.$ This proves~\eqref{inK}.

Now, take $K=H.$ Fix a ball $B\subset \rn$ and let $z_B$ be given by Proposition~\ref{prop1}. Then for any $\varphi\in\BMO(\rn),$ 
$
|\varphi(z_B)-\av{\varphi}B|\lesssim (\log n+1)\, \|\varphi\|_{\scriptscriptstyle H}.
$
Thus, for some dimensional constant $c(n)$ and for $\lambda\ge c(n)\|\varphi\|_{\scriptscriptstyle H},$
\begin{align*}
\frac1{|B|}\,|\{t\in B\colon~|\varphi(t)-\av{\varphi}B|>\lambda\}|&\le 
\frac1{|B|}\,|\{t\in B\colon~|\varphi(t)-\varphi(z_B)|>\lambda-c(n)\|\varphi\|_{\scriptscriptstyle H}\}|\\
&\lesssim \sqrt n\,\chi^{}_{\scriptstyle\{|\varphi-\varphi(z_B)|>\lambda-c(n)\|\varphi\|_H\}}(z_B)\\
&\lesssim \sqrt n\, e^{-\frac{\lambda-c(n)\|\varphi\|_H}{\|\varphi\|_H}}=C(n)\,e^{-\frac{\lambda}{\|\varphi\|_H}},
\end{align*}
where $C(n)$ is another dimensional constant. By adjusting $C(n)$ we can assume that $\lambda\ge0.$ By Proposition~\ref{prop}, $\|\varphi\|_{\scriptscriptstyle H}\lesssim \sqrt n\|\varphi\|_*,$ and the proof is thus complete.
\end{proof}
\begin{remark}
The constant $C(n)$ in the proof above can be optimized, but it is not important for our purposes.
We do not know if the constant $\sqrt n$ in Proposition~\ref{prop} is sharp, though we suspect that it is. The analogous statement for the Poisson kernel is  $\|\varphi\|_{\scriptscriptstyle P}\lesssim n\|\varphi\|_*,$ thus using $K=P$ in the proof would yield the estimate $\ve^{\scriptscriptstyle\rm JN}_*(n)\gtrsim n^{-1},$ which is worse than~\eqref{in*}. 
\end{remark}

\section{Proofs of the main theorems}
\label{proofs}
Here we first prove Theorem~\ref{main_bmo}, and then describe what changes are necessary in the proof of Theorem~\ref{main_ap}, which is largely the same.
The key ingredient in the proof is the following result, which is a special case of a general theorem from~\cite{sz} (cf. the theorem on p.~230 of that paper).
\begin{theorem}[\cite{sz}]
\label{sz}
The function $\bel{B}_*(\,\cdot\,;\, \mu,f)$ is the minimal locally concave function $U$ on $\Omega_\mu$ satisfying the boundary condition $U(x_1,x_1^2)=f(x_1),$ $x_1 \in \mathbb{R}.$
\end{theorem}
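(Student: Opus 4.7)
The plan is to verify three properties of $\bel{B}_*(\,\cdot\,;\, \mu,f)$: it satisfies the boundary condition, it is locally concave on $\Omega_\mu$, and it is minimal among functions with these two properties. The boundary condition is immediate. If $x_2=x_1^2$, then $\av{\eta}{I}^2=\av{\eta^2}{I}$ forces $\eta\equiv x_1$ almost everywhere on $I$, so the only admissible test function is the constant $x_1$ and $\bel{B}_*(x_1,x_1^2)=f(x_1)$.

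For local concavity I would use a concatenation argument. Fix a segment $[x',x'']\subset\Omega_\mu$, $\alpha\in(0,1)$, and let $x=\alpha x'+(1-\alpha)x''$. Given $\ve>0$, pick near-extremal test functions $\eta'$ on $I'=[0,\alpha]$ and $\eta''$ on $I''=[\alpha,1]$ with the prescribed averages and $\av{f\circ\eta'}{I'}\ge \bel{B}_*(x')-\ve$, similarly for $\eta''$. The concatenation $\eta=\eta'\chi_{I'}+\eta''\chi_{I''}$ on $I=[0,1]$ matches $\av{\eta}{I}=x_1$ and $\av{\eta^2}{I}=x_2$. To check $\|\eta\|_{*,I}\le \mu$, take any subinterval $J\subset I$: if $J\subset I'$ or $J\subset I''$, the constraint is inherited from $\eta',\eta''$; if $J$ straddles the junction, then $(\av{\eta}{J},\av{\eta^2}{J})$ is a convex combination of two points on $[x',x'']$ and hence in $\Omega_\mu$ by the segment-inclusion hypothesis. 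Letting $\ve\to 0$ gives $\bel{B}_*(x)\ge\alpha\bel{B}_*(x')+(1-\alpha)\bel{B}_*(x'')$.

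For minimality, let $U$ be locally concave on $\Omega_\mu$ with $U(x_1,x_1^2)=f(x_1)$. Given any admissible $\eta$ on $I$ with averages $x$, I would iterate dyadic bisection. At each stage $I$ splits into children $I_-,I_+$ with $x_I=\tfrac12(x_{I_-}+x_{I_+})$; provided $[x_{I_-},x_{I_+}]\subset\Omega_\mu$, local concavity yields $U(x_I)\ge\tfrac12(U(x_{I_-})+U(x_{I_+}))$. Iterating gives
\[
U(x)\ge \frac{1}{|I|}\sum_{J\in\mathcal{D}_n(I)} |J|\,U(x_J).
\]
By Lebesgue differentiation, for a.e.\ $t\in I$ the averages $x_J$ over dyadic $J\ni t$ converge to $(\eta(t),\eta(t)^2)$, which lies on the lower boundary; using the boundary condition and a Fatou-type argument, $U(x_J)\to f(\eta(t))$. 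Passing to the limit, $U(x)\ge\av{f\circ\eta}{I}$, and taking the supremum over $\eta$ gives $U\ge \bel{B}_*$.

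The main obstacle in both steps is verifying the segment-in-domain hypothesis needed to invoke local concavity, since $\Omega_\mu$ is not convex (its upper parabolic boundary bends into the region). In Step 2 this has to be arranged at the junction, for instance by selecting $\eta',\eta''$ whose BMO norms are strictly less than $\mu$ to leave slack for the straddling intervals. In Step 3 it is a consequence of the BMO constraint on subintervals that straddle the bisection point, together with a careful analysis of how dyadic averages propagate through $\Omega_\mu$; this is the technical heart of the argument in~\cite{sz}. A convenient device is to prove the claim first with $\mu$ replaced by $\tilde\mu<\mu$ and then let $\tilde\mu\nearrow\mu$, which provides the uniform slack needed to keep all relevant segments inside $\Omega_\mu$.
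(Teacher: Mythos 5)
First, a point of comparison: the paper does not prove Theorem~\ref{sz} at all --- it is imported verbatim from~\cite{sz} (the theorem on p.~230 there), so there is no in-paper argument to measure yours against. Your sketch does outline the strategy of~\cite{sz} and~\cite{long1} (boundary condition, local concavity, Bellman induction for minimality), and the boundary-condition step is fine. But both nontrivial steps contain genuine gaps, and they are precisely the points where the cited works have to work hard.

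In Step 2, the assertion that for a straddling interval $J$ the point $(\av{\eta}{J},\av{\eta^2}{J})$ is ``a convex combination of two points on $[x',x'']$'' is false: it is a convex combination of the Bellman points of $\eta'$ restricted to $J\cap I'$ and of $\eta''$ restricted to $J\cap I''$, and these are essentially arbitrary points of $\Omega_\mu$, not points of the segment $[x',x'']$. Since the midpoint of $(a,a^2+\mu^2)$ and $(b,b^2+\mu^2)$ has oscillation $\mu^2+(a-b)^2/4$, such a combination leaves $\Omega_\mu$ as soon as the two restricted averages differ by more than a fixed amount; taking $\|\eta'\|_*,\|\eta''\|_*\le\tilde\mu<\mu$ only buys room $\mu^2-\tilde\mu^2$, while $(a-b)^2/4$ need not be small. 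The same defect undermines Step 3 as written: under equal bisection one only knows $x_{I_\pm}\in\Omega_\mu$ and $x_I=\tfrac12(x_{I_-}+x_{I_+})$, and $|\av{\eta}{I_-}-\av{\eta}{I_+}|$ can be of order $\mu$, so $[x_{I_-},x_{I_+}]\not\subset\Omega_\mu$ in general; the $\tilde\mu\nearrow\mu$ device does not repair this. In~\cite{sz,long1} the minimality direction is carried out with an adaptive, non-dyadic splitting lemma in which the ratio of each split is chosen so that the chord stays inside the domain while the subintervals still shrink, and the local concavity of $\bel{B}_*$ is \emph{not} obtained by concatenation: one instead proves $\bel{B}_*\ge\mathfrak{B}$, where $\mathfrak{B}$ is the minimal locally concave function with the given boundary data, by constructing near-optimal test functions along the extremal trajectories of $\mathfrak{B}$. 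So your outline names the right statements, but the two mechanisms proposed to prove them do not close as stated.
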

By ``locally concave'' we mean a function that is concave on any convex subset of $\Omega_\mu.$ Theorem~\ref{main_bmo} is an immediate corollary of Theorem~\ref{sz} and the following lemma.
\begin{lemma}
\label{ito}
Let $U$ be a non-negative, locally concave function on $\Omega_\mu,$ $\varphi\in\BMO(\rn)$ with $\|\varphi\|_{\scriptscriptstyle K}<\mu,$ and $z_0\in\mathbb{R}^{n+1}_+.$ Then
\eq[uge]{
U(\varphi(z_0),\varphi^2(z_0))\ge U(\varphi,\varphi^2)(z_0).
}
\end{lemma}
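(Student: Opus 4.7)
\emph{Strategy.} I plan to prove~\eqref{uge} through an Itô/supermartingale argument, with the catch that $\Omega_\mu$ is non-convex, so a plain Jensen inequality is unavailable; the semigroup structure of $K$ is what bridges the gap, by providing a continuous $\mathbb{R}^2$-valued martingale representation of $(\varphi,\varphi^2)$ against which local concavity of $U$ produces a nonpositive drift. Write $z_0=(y_0,t_0).$ For $K=H,$ I would take (suitably scaled) Brownian motion $X_s$ in $\rn$ started at $y_0$ and set $Z_s=(X_s,t_0-s),$ $s\in[0,t_0];$ for $K=P,$ I would take Brownian motion $Z_s=(X_s,T_s)$ in $\mathbb{R}^{n+1}$ started at $z_0$ and stopped at the first hitting time $\tau$ of $\{t=0\}.$ In either case the $K$-extensions $\varphi$ and $\varphi^2$ satisfy the governing parabolic/elliptic equation, so $M_s:=(\varphi(Z_s),\,\varphi^2(Z_s))$ is a continuous $\mathbb{R}^2$-valued (local) martingale with $M_0=(\varphi(z_0),\varphi^2(z_0))$ whose terminal law satisfies $\mathbb{E}[h(M_{\mathrm{end}})]=(h\circ(\varphi,\varphi^2))(z_0)$ for bounded continuous $h.$

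\emph{Staying in the domain and mollification.} Because $\|\varphi\|_{\scriptscriptstyle K}<\mu,$ we have $(\varphi(z),\varphi^2(z))\in\Omega_{\tilde\mu}$ for every $z$ and some $\tilde\mu\in(\|\varphi\|_{\scriptscriptstyle K},\mu),$ hence $M_s\in\Omega_{\tilde\mu}$ almost surely. A plain Euclidean convolution of $U$ with a bump need not be locally concave on any natural subdomain because $\Omega_\mu$ is non-convex; instead, I would mollify along the additive symmetries of $\Omega_\mu$ highlighted in Section~\ref{main}. The one-parameter family $T_c(x_1,x_2)=(x_1+c,\,x_2+2cx_1+c^2)$ preserves every $\Omega_\nu$ (it corresponds to the BMO-invariant shift $\eta\mapsto\eta+c$), so averaging $U\circ T_c$ against a smooth kernel in $c$ preserves local concavity and smooths $U$ in one direction; a small vertical shift $(x_1,x_2)\mapsto(x_1,x_2-r)$ combined with a slight enlargement of $\mu$ takes care of the other direction. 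The result is a family $U_\varepsilon\in C^2$ of locally concave functions on an open neighborhood of $\Omega_{\tilde\mu},$ with $U_\varepsilon\to U$ pointwise as $\varepsilon\to 0.$

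\emph{Itô's formula, conclusion, and main obstacle.} For such $U_\varepsilon,$ local concavity is equivalent to $D^2U_\varepsilon(x)\le0$ as a symmetric matrix at every interior point $x$ (each such point has a small Euclidean ball around it on which $U_\varepsilon$ is classically concave). Itô's formula gives
\[
dU_\varepsilon(M_s)=(\text{local martingale})+\tfrac12\,\operatorname{tr}\bigl(D^2U_\varepsilon(M_s)\,d\langle M,M\rangle_s\bigr),
\]
whose drift term is nonpositive along $M_s\in\Omega_{\tilde\mu},$ so $U_\varepsilon(M_s)$ is a nonnegative supermartingale. Optional stopping (with routine localization to handle integrability) yields
\[
U_\varepsilon(\varphi(z_0),\varphi^2(z_0))=U_\varepsilon(M_0)\ge\mathbb{E}\,U_\varepsilon(M_{\mathrm{end}})=U_\varepsilon(\varphi,\varphi^2)(z_0);
\]
letting $\varepsilon\to 0$ and applying Fatou produces~\eqref{uge}. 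The main technical obstacle is the mollification step — arranging for $U_\varepsilon$ to be (i) defined on an open set containing the range of $M_s,$ (ii) genuinely locally concave, and (iii) a good pointwise approximant of $U$ — which is what forces the use of the intrinsic affine symmetries of $\Omega_\mu$ instead of ordinary Euclidean mollification.
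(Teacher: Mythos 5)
Your proposal follows the paper's proof almost step for step: the same probabilistic representation of $K_t$ (Brownian motion stopped at $\{t=0\}$ for $P$, spatial Brownian motion with a deterministic time decrement for $H$), the same martingale $\Phi_s=(\varphi(Z_s),\varphi^2(Z_s))$ confined to $\Omega_{\tilde\mu}$, the same It\^o/supermartingale computation for smooth $U$, and the same mollification along the parabolic shifts $T_c(x_1,x_2)=(x_1+c,\,x_2+2cx_1+c^2)$ combined with a vertical shift exploiting the slack $\mu^2-\tilde\mu^2$.

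There is one genuine gap in your limiting step. You assert that $U_\varepsilon\to U$ pointwise and then apply Fatou; but a general locally concave $U$ on $\Omega_\mu$ is continuous only in the interior and may be discontinuous on $\partial\Omega_\mu$ — and the terminal value $M_{\mathrm{end}}$ is supported exactly on the lower boundary $\{x_2=x_1^2\}$ (since $Z_{\mathrm{end}}\in\{t=0\}$ forces $\varphi^2(Z_{\mathrm{end}})=\varphi(Z_{\mathrm{end}})^2$). So pointwise convergence of the mollification fails precisely where you need to pass to the limit on the right-hand side. What saves the argument is the inequality $\liminf_{s\to0^+}U(x_1,x_1^2+s)\ge U(x_1,x_1^2)$, which follows from one-dimensional concavity of $U(x_1,\cdot)$ on $[x_1^2,x_1^2+\mu^2]$; this is why the paper runs the argument in two stages, first for continuous $U$ via the convolution mollifier, and then for general $U$ via the pure vertical shift $V_j(x)=U\bigl(x_1,x_2+\frac{\mu^2-\tilde\mu^2}{j}\bigr)$, for which $\liminf_jV_j\ge U$ pointwise and Fatou applies. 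Relatedly, be careful with the sign of your vertical shift: the mollified function must be defined on a neighborhood of the lower boundary of $\Omega_{\tilde\mu}$, so the argument of $U$ must be shifted upward into $\Omega_\mu$ (one cannot ``enlarge $\mu$,'' as $U$ is given only on $\Omega_\mu$); the room for this comes from $\tilde\mu<\mu$ at the top of the domain. With these two repairs your argument coincides with the paper's.
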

\begin{proof}[Proof of Theorem~\ref{main_bmo}]

By Theorem~\ref{sz}, $\bel{B}_*(\,\cdot\,;\,\mu,f)$ can be used as $U$ in Lemma~\ref{ito}. Fix a point $z_0\in\mathbb{R}^{n+1}_+$ and take any point $x\in\Omega_{\tilde\mu}.$ It is easy to show that the set
$$
F_{x,\tilde\mu,z_0}:=\{\varphi\in\BMO(\rn)\colon~\|\varphi\|_{\scriptscriptstyle K}\le\tilde\mu; (\varphi(z_0),\varphi^2(z_0))=x\}
$$ 
is non-empty. Now, take supremum of the right-hand side of~\eqref{uge} over all elements of $F_{x,\tilde\mu,z_0}.$ That supremum is precisely $\bel{B}_{\scriptscriptstyle K}(x;\,\tilde\mu,f,n).$
\end{proof}

\begin{proof}[Proof of Lemma~\ref{ito}] We first prove the lemma under an additional assumption that $U$ is $C^2,$ then for continuous $U,$ and, finally, for general $U.$

Let $\tilde\mu=\|\varphi\|_{\scriptscriptstyle K}.$ First, assume that $U$ is non-negative and $C^2$ in a neighborhood of $\Omega_{\tilde\mu}$ and that its Hessian ${\rm d}^2 U$ is non-positive definite. Let
$$
L_+=\{(y,t)\in\mathbb{R}^{n+1}\colon~y\in\rn, t>0\},\quad  L_0=\{(y,0)\in\mathbb{R}^{n+1}\colon~y\in\rn\}.
$$
We will use the following probabilistic representation of the kernel $K_t$ (see, e.g. \cite{iw}): there is an It\^o process $Z_t$ starting at $z_0=(y_0,t_0),$ arriving almost surely at $L_0$ in finite time, and such that for any non-negative function $g$ on $\rn,$
\eq[gz]{
g(z_0)=\mathbb{E}\,g(Z_\infty).
}
For $K=P,$ $Z_t=Z_t^P$ is the $(n+1)$-dimensional Brownian motion starting at $z_0$ and stopped at $L_0$ (when its last component is 0): $Z_t^P=(x_0+B^n_{\min(t,\tau)}, t_0+B^1_{\min(t,\tau)}),$ where $\tau=\min\{s\colon B^1_s+t_0=0\}$ is the stopping time.
For $K=H,$ $Z_t = Z_t^H$ is given by the $n$-dimensional Brownian motion starting at $y_0$ for the first $n$ components and the variable $t_0-\frac12 t$ for the last component, also stopped at $L_0$ (at the non-random time $t=2t_0$): $Z_t^H=(x_0+B^n_{\min(t,2t_0)},t_0-\frac12 \min(t,2t_0)).$

Now,
$\Phi_t:= (\varphi(Z_t), \varphi^2(Z_t))$ is an It\^o martingale:
$$
d\Phi_t = \sigma_t\, dB^2_t
$$
for an appropriate diffusion matrix $\sigma_t$. In addition, since $\|\varphi\|_{\scriptscriptstyle K}=\tilde\mu,$ $\Phi_t$ takes values in $\Omega_{\tilde\mu}.$ 
It\^o's formula and the non-positivity of ${\rm d}^2U$ imply that the expectation
$
\mathbb{E}\,U\big(\Phi_t\big)
$
is non-increasing in~$t$: 
$$
\frac{d}{dt}\,\mathbb{E}\,U(\Phi_t) = \frac{1}{2}\,\mathbb{E}\,\mathrm{Tr}\,[\sigma_t^T {\rm d}^2U(\Phi_t) \sigma_t]\leq 0.
$$
Therefore,
\begin{align*}
U(\varphi(z_0),\varphi^2(z_0))&=\mathbb{E}\,U\big(\Phi_0\big)
\ge \mathbb{E}\,U\big(\Phi_\infty\big)
= U(\varphi,\varphi^2)(z_0),
\end{align*}
where we have used~\eqref{gz} and the fact that $Z_\infty\in L_0$ a.s. and, thus, $\varphi^2(Z_\infty)=\varphi(Z_\infty)^2.$ 

We have proved the lemma for smooth $U.$ Now, suppose that $U$ is continuous on $\Omega_\mu.$ We construct a sequence of non-negative, smooth, locally concave functions $U_j$ on a neighborhood of $\Omega_{\tilde\mu}$ that converges to $U$ pointwise on $\Omega_{\tilde\mu}.$ For each such function $U_j$ inequality~\eqref{uge} is already proved, and one can take the limit on the right using Fatou's lemma. 

In order to construct $U_j,$ we employ a convolution-like mollifier using the additive homogeneity of the domain. Fix some parameters $\rho_j>0$ and $r_j>0$ to be chosen later. Take a function $\psi_j$ in $C^\infty(\mathbb{R}^2)$ with support in the ball of radius $r_j$ centered at the origin and satisfying $\psi_j \geq 0$ and $\int\psi_j=1.$ We define 
\eq[eq3]{
U_j(x_1,x_2) = \iint_{\mathbb{R}^2} U(x_1-t_1, x_2+\rho_j- 2x_1t_1+t_1^2-t_2) \psi_j(t_1,t_2)\, dt_1\, dt_2.
}
This function is obviously smooth; it is also locally concave because for each fixed $(t_1,t_2)$ the function 
$$
(x_1,x_2)\mapsto U(x_1-t_1, x_2+\rho_j- 2x_1t_1+t_1^2-t_2)
$$ 
is locally concave. $U_j$ is correctly defined whenever
\eq[eq2]{
r_j-\rho_j \leq x_2-x_1^2 \leq \mu^2-r_j- \rho_j,
}
because in this case we have $(x_1-t_1, x_2+\rho_j- 2x_1t_1+t_1^2-t_2) \in \Omega_\mu$ for $(t_1,t_2) \in \mathrm{supp}(\psi_j)$. We may choose the sequences $\{r_j\}$ and $\{\rho_j\}$ so that $r_j\to0,$ $\rho_j\to0,$ $r_j-\rho_j<0,$ and  $r_j+ \rho_j<\mu^2-\tilde\mu^2.$ The function $U_j$ is now defined on some neighborhood of $\Omega_{\tilde\mu},$ and by the continuity of $U$ we have $U_j\to U$ on $\Omega_{\tilde\mu}.$ This completes the proof for the continuous case.

It remains to prove~\eqref{uge} for any locally concave function $U$ on $\Omega_\mu.$ If $\varphi^2(z_0)=\varphi(z_0)^2,$ \eqref{uge} holds with equality, so let us assume that $\varphi^2(z_0)>\varphi(z_0)^2.$ Define a new sequence of functions, $\{V_j\},$ by
\eq[vn]{
V_j(x_1,x_2)=U\big(x_1, x_2+\textstyle{\frac{\mu^2-\tilde\mu^2}j}\big).
}
Then each $V_j$ is defined on $\Omega_{\bar\mu}$ where $\bar\mu^2:=\frac{\mu^2+\tilde\mu^2}2.$ Furthermore, each $V_j$ is continuous, locally concave, and non-negative on $\Omega_{\bar\mu}.$ By the continuous case shown above, 
$$
V_j(\varphi(z_0),\varphi^2(z_0))\ge V_j(\varphi,\varphi^2)(z_0).
$$
The left-hand side converges to $U(\varphi(z_0),\varphi^2(z_0))$ as $j\to\infty$ since $(\varphi(z_0),\varphi^2(z_0))$ is an interior point of $\Omega_\mu,$ which means that $U$ is continuous at that point. As for the right-hand side, we have a pointwise inequality
$$
\liminf V_j(\varphi, \varphi^2)=\liminf U\big(\varphi, \varphi^2+\textstyle{\frac{\mu^2-\tilde\mu^2}j}\big)\ge U(\varphi,\varphi^2),
$$
since $U(x_1,\,\cdot\,)$ is concave on the interval $[x_1^2, x_1^2+\mu^2].$ An application of Fatou's lemma finishes the proof.
\end{proof}
The proof of Theorem~\ref{main_ap} is exactly the same as that of Theorem~\ref{main_bmo}, except Theorem~\ref{sz} and Lemma~\ref{ito} are replaced with the following two analogs.
\begin{theorem}[\cite{sz}]
\label{sz1}
~
For $1<p<\infty,$ the function $\bel{D}_p(\,\cdot\,; \delta,f)$ is the minimal locally concave function $U$ on $\Omega_{p,\delta}$ satisfying the boundary condition $U(x_1,x_1^{-1/(p-1)})=f(x_1),$ $x_1 >0$.

The function $\bel{D}_\infty(\,\cdot\,; \delta,f)$ is the minimal locally concave function $U$ on $\Omega_{\infty,\delta}$ satisfying the boundary condition $U(x_1,\log x_1)=f(x_1),$ $x_1 >0$.
\end{theorem}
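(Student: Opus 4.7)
The plan is to mimic the proof of Theorem~\ref{sz} with the boundary curve and Bellman domain appropriate for $A_p$, establishing for each $p \in (1,\infty]$ that (a) $\bel{D}_p$ (resp.\ $\bel{D}_\infty$) satisfies the stated boundary condition, (b) it is locally concave on its domain, and (c) $\bel{D}_p \le U$ for every locally concave competitor $U$ matching the boundary values. Part (a) follows from the equality case of Hölder's inequality for $1 < p < \infty$, which forces $v$ to be constant when $x_1 x_2^{p-1} = 1$, and from the equality case of Jensen's inequality applied to $\log$ for $p = \infty$; in both cases the only admissible weight is $v \equiv x_1$, so the supremum equals $f(x_1)$.

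For (b), fix $x^0, x^1 \in \Omega_{p,\delta}$ whose connecting segment lies in $\Omega_{p,\delta}$, fix $\alpha \in (0,1)$, and take near-optimal weights $v_0, v_1$ for $\bel{D}_p$ at $x^0, x^1$. Split $I = I_- \cup I_+$ with $|I_-| = \alpha|I|$ and rescale each $v_i$ to live on the corresponding piece, producing a single weight $v$ on $I$ whose averages land at $\alpha x^0 + (1-\alpha)x^1$. The nontrivial check is $[v]_{p,I} \le \delta$: subintervals staying inside $I_\pm$ inherit the bound from $v_0$ or $v_1$, while for a subinterval $J$ crossing the split the pair $(\av{v}{J}, \av{v^{-1/(p-1)}}{J})$ is a convex combination of analogous pairs from the two sides, hence lies on a chord of $\Omega_{p,\delta}$ parallel to the one joining $x^0$ and $x^1$. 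The hypothesis that $[x^0, x^1] \subset \Omega_{p,\delta}$, combined with the multiplicative homogeneity of the domain under $(x_1,x_2) \mapsto (\lambda x_1, \lambda^{-1/(p-1)} x_2)$, ensures that all such chords remain inside $\Omega_{p,\delta}$. Taking the supremum over $v_0, v_1$ gives the concavity inequality $\bel{D}_p(\alpha x^0 + (1-\alpha) x^1) \ge \alpha \bel{D}_p(x^0) + (1-\alpha)\bel{D}_p(x^1)$.

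For (c), let $U$ be a non-negative locally concave function on $\Omega_{p,\delta}$ matching $f$ on the lower boundary, and let $v \in A_p(I)$ have averages $x$ with $[v]_{p,I} \le \delta$. Partition $I$ dyadically into $2^N$ subintervals and set $x^J = (\av{v}{J}, \av{v^{-1/(p-1)}}{J}) \in \Omega_{p,\delta}$ for each dyadic $J$. At each parent--children split, the parent lies on the segment joining the children, and that segment is contained in $\Omega_{p,\delta}$ because it arises from rescaled averages of the same $A_p$ weight; local concavity of $U$ then gives $U(x^{\mathrm{parent}}) \ge \tfrac12 U(x^{\mathrm{left}}) + \tfrac12 U(x^{\mathrm{right}})$. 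Iterating $N$ times and applying the Lebesgue differentiation theorem, $x^J$ converges a.e.\ to a point on the lower boundary where $U = f$; Fatou's lemma then yields $U(x) \ge \av{f \circ v}{I}$, which after optimizing over $v$ gives $U \ge \bel{D}_p$. The case $\bel{D}_\infty$ is argued identically with the second coordinate $\av{v^{-1/(p-1)}}{\cdot}$ replaced throughout by $\av{\log v}{\cdot}$ and Hölder replaced by Jensen at the boundary.

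The main obstacle is that $\Omega_{p,\delta}$ is non-convex with curvilinear upper and lower boundaries, so verifying that the chords in (b) and the martingale increments in (c) stay inside the domain is considerably more delicate than in the BMO case, where $\Omega_\mu$ is bounded by a parabola and a line. The multiplicative homogeneity mentioned above is the key structural tool for reducing these verifications to a single chord condition. Moreover, the limiting step in (c) requires that $U$ be sufficiently regular to pass from finite dyadic sums to integrals; to make this rigorous one would first mollify $U$ using the multiplicative analog of the convolution~\eqref{eq3} and then remove a small buffer near the boundary via a perturbation analogous to~\eqref{vn}, just as in the last two stages of the proof of Lemma~\ref{ito}.
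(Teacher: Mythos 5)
You should first note that the paper does not prove this statement at all: it is imported verbatim as a special case of the general theorem on p.~230 of~\cite{sz}, exactly as Theorem~\ref{sz} is. So any proof you give is necessarily ``a different route from the paper''; the right comparison is with the argument in~\cite{sz}, whose overall architecture --- (a) boundary values via the equality case of H\"older/Jensen, (b) local concavity of $\bel{D}_p$ by concatenation of near-optimal weights, (c) the reverse inequality by Bellman induction over a splitting of $I$ --- your outline correctly reproduces. Part (a) is fine as stated.

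The genuine gaps are in the two chord-containment claims, which are precisely where~\cite{sz} does its real work. In (b), the assertion that the point $(\av{v}{J},\av{v^{-1/(p-1)}}{J})$ for a straddling interval $J$ ``lies on a chord of $\Omega_{p,\delta}$ parallel to the one joining $x^0$ and $x^1$'' is unjustified and generally false: the endpoints of that chord are averages of $v_0$ and $v_1$ over \emph{arbitrary} subintervals abutting the splitting point, not rescaled copies of $x^0$ and $x^1$; moreover the homogeneity $(x_1,x_2)\mapsto(\lambda x_1,\lambda^{-1/(p-1)}x_2)$ is not a translation and does not carry chords to parallel chords. In (c), ``that segment is contained in $\Omega_{p,\delta}$ because it arises from rescaled averages of the same $A_p$ weight'' is a non sequitur: $\Omega_{p,\delta}$ is non-convex (the upper boundary $x_1x_2^{p-1}=\delta$ bounds it from the wrong side), so membership of the two children points and of the parent point on the segment between them does not prevent the segment itself from leaving the domain through the upper boundary; a fixed dyadic splitting will not do, and one must choose splitting points adaptively (or restrict to a suitable subclass of test weights, as in~\cite{sz} and~\cite{long1}) so that consecutive average-points are close enough for the chords to stay inside. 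Finally, your proposed fix for the limiting step in (c) --- mollify $U$ as in~\eqref{eq3} and shrink the domain as in~\eqref{vn} --- does not work there: those regularizations destroy the boundary condition $U(x_1,x_1^{-1/(p-1)})=f(x_1)$, which is exactly what the Bellman induction must use in the limit; what is actually needed is an argument that $\liminf U(x^J)\ge f(v(t))$ as the average-points approach the lower boundary, using concavity of $U$ along segments reaching that boundary together with the a.e.\ convergence from Lebesgue differentiation.
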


\begin{lemma}
\label{ito1}
Let $U$ be a non-negative, locally concave function on $\Omega_{p,\delta},$ $w\in A_p^K(\rn)$ with $[w]_{p}^{\scriptscriptstyle K}<\delta,$ and $z_0\in\mathbb{R}^{n+1}_+.$ 

If $1<p<\infty,$ then
\eq[ugep]{
U(w(z_0),w^{-\frac1{p-1}}(z_0))\ge U(w,w^{-\frac1{p-1}})(z_0).
}
If $p=\infty,$ then
\eq[ugeinfty]{
U(w(z_0),\log w(z_0))\ge U(w,\log w)(z_0).
}
\end{lemma}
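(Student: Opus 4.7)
The plan is to mirror the three-step structure of the proof of Lemma~\ref{ito}: prove the inequality first for smooth $U$ via It\^o's formula, extend to continuous $U$ by a smooth mollifier, and finally to arbitrary locally concave $U$ by nudging the argument into the interior of the domain. The probabilistic ingredients ($Z_t$, stopping at $L_0$, and the identity $g(z_0)=\mathbb{E}\,g(Z_\infty)$) are unchanged; the new content lies in the mollification and push-to-interior steps, which must now respect the multiplicative symmetry of the $A_p$ domain in place of the additive symmetry of the BMO parabola.

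\emph{Smooth case.} Assume $U\in C^2$ with $\mathrm{d}^2 U\leq 0$ on a neighborhood of $\Omega_{p,\tilde\delta}$, where $\tilde\delta=[w]_p^{\scriptscriptstyle K}$. For $1<p<\infty$ set $\Phi_t=(w(Z_t),w^{-1/(p-1)}(Z_t))$; for $p=\infty$ set $\Phi_t=(w(Z_t),\log w(Z_t))$. By the definition of $[w]_p^{\scriptscriptstyle K}$, $\Phi_t$ lies in $\Omega_{p,\tilde\delta}$. It\^o's formula and non-positivity of $\mathrm{d}^2U$ give $\tfrac{d}{dt}\mathbb{E}\,U(\Phi_t)\leq 0$, and applying $g(z_0)=\mathbb{E}\,g(Z_\infty)$ with $g=U(w,w^{-1/(p-1)})$ (resp.\ $g=U(w,\log w)$) yields~\eqref{ugep} (resp.~\eqref{ugeinfty}).

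\emph{Continuous case.} The key observation is that the linear one-parameter family $T_{\alpha,\beta}(x_1,x_2)=(\alpha x_1,\,\beta \alpha^{-1/(p-1)}x_2)$ preserves local concavity in $(x_1,x_2)$ and sends $x_1 x_2^{p-1}\mapsto \beta^{p-1}x_1 x_2^{p-1}$. For non-negative $\psi_j\in C_c^\infty(\mathbb{R}^2)$ with $\int\psi_j=1$ and support of radius $r_j$, and $\rho_j>0$, define
\begin{equation*}
U_j(x_1,x_2)=\iint U\bigl(e^{t_1}x_1,\,e^{\rho_j+t_2-t_1/(p-1)}x_2\bigr)\,\psi_j(t_1,t_2)\,dt_1\,dt_2.
\end{equation*}
Choosing $r_j\leq\rho_j$ and $(p-1)(\rho_j+r_j)\leq\log(\delta/\tilde\delta)$ ensures the argument stays in $\Omega_{p,\delta}$ for every $(x_1,x_2)\in\Omega_{p,\tilde\delta}$; hence $U_j$ is smooth, locally concave, and converges to $U$ pointwise on $\Omega_{p,\tilde\delta}$. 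The smooth case applied to $U_j$, together with Fatou's lemma on the right-hand side, gives the continuous case. For $p=\infty$ the analogous construction uses the affine fiber maps $(x_1,x_2)\mapsto(\alpha x_1,\,x_2+\log\alpha-\rho_j+t_2)$, under the same constraints on $r_j,\rho_j$.

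\emph{General case and main obstacle.} For arbitrary locally concave $U$, set $V_j(x_1,x_2)=U(x_1,(1+j^{-1})x_2)$ when $1<p<\infty$ and $V_j(x_1,x_2)=U(x_1,x_2-j^{-1})$ when $p=\infty$. The fiber map is linear/affine, so each $V_j$ is locally concave on a neighborhood of $\Omega_{p,\tilde\delta}$; moreover its image lies in the interior of $\Omega_{p,\delta}$, where local concavity forces continuity. Applying the continuous case to $V_j$ and letting $j\to\infty$: the left side converges by continuity of $U$ at the interior point $(w(z_0),w^{-1/(p-1)}(z_0))$ (the boundary case being the trivial one, in which $w$ is a.s.\ constant), while the right side is controlled by Fatou's lemma together with concavity of $U(x_1,\cdot)$ along the vertical fibers of $\Omega_{p,\delta}$. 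The main obstacle is the continuous step: unlike $\Omega_\mu$, the $A_p$ domain admits no affine group of symmetries, so the mollifier must be built from the multiplicative scaling group, with $\rho_j$ and $r_j$ coupled to preserve domain invariance for every mollification parameter; the $p=\infty$ case further requires a separate but analogous additive-in-$x_2$ construction.
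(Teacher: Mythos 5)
Your proof is correct and follows essentially the same route as the paper: the identical three-step scheme (It\^o's formula for smooth $U$, mollification along the domain's multiplicative symmetries for continuous $U$, and a small push off the lower boundary for general $U$), with your $U_j$ and $V_j$ differing from the paper's only by a harmless reparametrization of the scaling factors. The only quibble is the closing remark that $\Omega_{p,\delta}$ "admits no affine group of symmetries" --- the diagonal scalings you use are linear, hence affine, just as the parabolic shifts used for $\Omega_\mu$ are; this does not affect the argument.
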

Theorem~\ref{sz1} is again a special case of the same general theorem from~\cite{sz}. To prove Lemma~\ref{ito1}, we let $\tilde\delta=[w]_{p}^{\scriptscriptstyle K}$ and
modify formulas~\eqref{eq3} and~\eqref{vn} as follows. For $1<p<\infty$ we let
$$
U_j(x_1,x_2)=\iint_{\mathbb{R}^2}U\big(e^{\rho_j-t_1} x_1, e^{-t_2} x_2\big)\psi_j(t_1,t_2)\,dt_1\,dt_2,
$$
where the non-negative sequences $\{r_j\}$ (the radius of the support of $\psi_j$) and $\{\rho_j\}$ are chosen so that $r_j\to0,$ $\rho_j\to0,$ $\rho_j>p r_j,$ and $\rho_j+p r_j< \log (\delta/\tilde\delta).$

Formula~\eqref{vn} is replaced with
$$
V_j(x_1,x_2)=V\big(\lambda_jx_1,\lambda_j^{\frac1{p-1}}x_2\big)
$$
for a sequence $\lambda_j\to1^+.$

For $p=\infty,$~\eqref{eq3} is replaced with
$$
U_j(x_1,x_2)=\iint_{\mathbb{R}^2}U\big(e^{\rho_j-t_1} x_1, x_2-t_2\big)\psi_j(t_1,t_2)\,dt_1\,dt_2
$$
with $\{r_j\}$ and $\{\rho_j\}$ chosen so that $r_j\to0,$ $\rho_j\to0,$ $\rho_j>2r_j,$ and $\rho_j+2r_j< \log (\delta/\tilde\delta).$ Instead of~\eqref{vn} we have
$$
V_j(x_1,x_2)=U(\lambda_j x_1, x_2-\log \lambda_j)
$$
for a sequence $\lambda_j\to1^+.$

\end{document}